\documentclass[12pt]{article}

\usepackage[utf8]{inputenc}
\usepackage[english]{babel}
\usepackage{amssymb,amsmath}
\usepackage{amsthm}
\usepackage{float}
\usepackage{setspace,lipsum}
\usepackage{amsfonts}
\usepackage{graphicx}
\usepackage[margin=2.4cm]{geometry}
\usepackage{enumerate}
\usepackage{enumitem}
\usepackage{color}
\usepackage[noadjust]{cite}
\usepackage{tikz}
\usepackage{mathtools}
\usepackage{hyperref}
\usepackage{multirow}
\usepackage{array}
\usepackage[section]{placeins}
\usepackage{xfrac}
\usepackage{xspace}
\usepackage{ifthen}
\usepackage{longtable}

\newtheorem{thm}{Theorem}

\newtheorem{lem}[thm]{Lemma}

\newtheorem{defn}{Definition}

\title{Complete polyhedral description of chemical graphs of maximum degree at most 3}
\author{
	Valentin Dusollier\textsuperscript{1},
    S\'ebastien Bonte\textsuperscript{1},
	Gauvain Devillez\textsuperscript{1},
	\\
	Alain Hertz\textsuperscript{2},
	Hadrien M\'elot\textsuperscript{1},
	David Schindl\textsuperscript{3}	
	\\[3mm]
	\footnotesize \textsuperscript{1} Computer Science Department - Algorithms Lab\\[-2mm]
	\footnotesize University of Mons, Mons, Belgium\\[3mm]
	\footnotesize \textsuperscript{2} Department of Mathematics and Industrial
	Engineering\\[-2mm]
	\footnotesize Polytechnique Montr\'eal - Gerad, Montr\'eal, Canada\\[-2mm]
	\footnotesize Corresponding author, alain.hertz@gerad.ca\\[3mm]
	\footnotesize \textsuperscript{3} Haute Ecole de Gestion de Gen\`eve\\[-2mm]
	\footnotesize University of Applied Sciences Western Switzerland, Gen\`eve, Switzerland
}

\date{\today}

\newboolean{use_new_notation}
\setboolean{use_new_notation}{true}

\newcommand{\G}[1]{%
\ifthenelse{\equal{#1}{1}}{\ifthenelse{\boolean{use_new_notation}}{F6}{G1}}{%
\ifthenelse{\equal{#1}{2}}{\ifthenelse{\boolean{use_new_notation}}{F1}{G2}}{%
\ifthenelse{\equal{#1}{3}}{\ifthenelse{\boolean{use_new_notation}}{F2}{G3}}{%
\ifthenelse{\equal{#1}{4}}{\ifthenelse{\boolean{use_new_notation}}{F12}{G4}}{%
\ifthenelse{\equal{#1}{5}}{\ifthenelse{\boolean{use_new_notation}}{F3}{G5}}{%
\ifthenelse{\equal{#1}{6}}{\ifthenelse{\boolean{use_new_notation}}{F13}{G6}}{%
\ifthenelse{\equal{#1}{7}}{\ifthenelse{\boolean{use_new_notation}}{F15}{G7}}{%
\ifthenelse{\equal{#1}{7bis}}{\ifthenelse{\boolean{use_new_notation}}{F16}{G7bis}}{%
\ifthenelse{\equal{#1}{8}}{\ifthenelse{\boolean{use_new_notation}}{F5}{G8}}{%
\ifthenelse{\equal{#1}{9}}{\ifthenelse{\boolean{use_new_notation}}{F4}{G9}}{%
\ifthenelse{\equal{#1}{10}}{\ifthenelse{\boolean{use_new_notation}}{F11}{G10}}{%
\ifthenelse{\equal{#1}{11}}{\ifthenelse{\boolean{use_new_notation}}{F7}{G11}}{%
\ifthenelse{\equal{#1}{12}}{\ifthenelse{\boolean{use_new_notation}}{F10}{G12}}{%
\ifthenelse{\equal{#1}{13}}{\ifthenelse{\boolean{use_new_notation}}{F9}{G13}}{%
\ifthenelse{\equal{#1}{14}}{\ifthenelse{\boolean{use_new_notation}}{F8}{G14}}{%
\ifthenelse{\equal{#1}{15}}{\ifthenelse{\boolean{use_new_notation}}{F20}{G15}}{%
\ifthenelse{\equal{#1}{16}}{\ifthenelse{\boolean{use_new_notation}}{F21}{G16}}{%
\ifthenelse{\equal{#1}{17}}{\ifthenelse{\boolean{use_new_notation}}{F17}{G17}}{%
\ifthenelse{\equal{#1}{18}}{\ifthenelse{\boolean{use_new_notation}}{F14}{G18}}{%
\ifthenelse{\equal{#1}{19}}{\ifthenelse{\boolean{use_new_notation}}{F18}{G19}}{%
\ifthenelse{\equal{#1}{20}}{\ifthenelse{\boolean{use_new_notation}}{F19}{G20}}{%
\ifthenelse{\equal{#1}{21}}{\ifthenelse{\boolean{use_new_notation}}{NOTYETDEFINED}{G21}}{%
\textbf{G?}%
}}}}}}}}}}}}}}}}}}}}}}}

\newcommand{\V}[1]{%
\ifthenelse{\equal{#1}{1}}{\ifthenelse{\boolean{use_new_notation}}{\textup{V1}}{\textup{V1}}}{%
\ifthenelse{\equal{#1}{2}}{\ifthenelse{\boolean{use_new_notation}}{\textup{V2}}{\textup{V2}}}{%
\ifthenelse{\equal{#1}{3}}{\ifthenelse{\boolean{use_new_notation}}{\textup{V3}}{\textup{V3}}}{%
\ifthenelse{\equal{#1}{6}}{\ifthenelse{\boolean{use_new_notation}}{\textup{V4}}{\textup{V6}}}{%
\ifthenelse{\equal{#1}{7a}}{\ifthenelse{\boolean{use_new_notation}}{\textup{V5}}{\textup{V7a}}}{%
\ifthenelse{\equal{#1}{7b}}{\ifthenelse{\boolean{use_new_notation}}{\textup{V6}}{\textup{V7b}}}{%
\ifthenelse{\equal{#1}{7c}}{\ifthenelse{\boolean{use_new_notation}}{\textup{V7}}{\textup{V7c}}}{%
\ifthenelse{\equal{#1}{8c}}{\ifthenelse{\boolean{use_new_notation}}{\textup{V8}}{\textup{V8c}}}{%
\ifthenelse{\equal{#1}{8d}}{\ifthenelse{\boolean{use_new_notation}}{\textup{V9}}{\textup{V8d}}}{%
\ifthenelse{\equal{#1}{9a}}{\ifthenelse{\boolean{use_new_notation}}{\textup{V10}}{\textup{V9a}}}{%
\ifthenelse{\equal{#1}{9b}}{\ifthenelse{\boolean{use_new_notation}}{\textup{V11}}{\textup{V9b}}}{%
\ifthenelse{\equal{#1}{9c}}{\ifthenelse{\boolean{use_new_notation}}{\textup{V12}}{\textup{V9c}}}{%
\ifthenelse{\equal{#1}{10a}}{\ifthenelse{\boolean{use_new_notation}}{\textup{V13}}{\textup{V10a}}}{%
\ifthenelse{\equal{#1}{10b}}{\ifthenelse{\boolean{use_new_notation}}{\textup{V14}}{\textup{V10b}}}{%
\ifthenelse{\equal{#1}{10c}}{\ifthenelse{\boolean{use_new_notation}}{\textup{V15}}{\textup{V10c}}}{%
\ifthenelse{\equal{#1}{11a}}{\ifthenelse{\boolean{use_new_notation}}{\textup{V16}}{\textup{V11a}}}{%
\ifthenelse{\equal{#1}{11b}}{\ifthenelse{\boolean{use_new_notation}}{\textup{V17}}{\textup{V11b}}}{%
\ifthenelse{\equal{#1}{11c}}{\ifthenelse{\boolean{use_new_notation}}{\textup{V18}}{\textup{V11c}}}{%
\ifthenelse{\equal{#1}{12a}}{\ifthenelse{\boolean{use_new_notation}}{\textup{V19}}{\textup{V12a}}}{%
\ifthenelse{\equal{#1}{12b}}{\ifthenelse{\boolean{use_new_notation}}{\textup{V20}}{\textup{V12b}}}{%
\ifthenelse{\equal{#1}{12c}}{\ifthenelse{\boolean{use_new_notation}}{\textup{V21}}{\textup{V12c}}}{%
\textbf{V?}%
}}}}}}}}}}}}}}}}}}}}}}

\begin{document}

\maketitle

\begin{center}
\emph{In memory of Pierre Hansen.}
\end{center}

\hrule

\vspace*{0.3cm}

\small
\noindent
\textbf{Abstract.}

\emph{Chemical graphs are simple undirected connected graphs, where vertices represent atoms in a molecule and edges represent chemical bonds.
A degree-based topological
index is a molecular descriptor used to study specific physicochemical
properties of molecules. Such an index is computed from the sum of the weights of the edges of a chemical graph, each edge 
having a weight defined by a formula that depends only on the degrees of its endpoints.
Given any degree-based topological index and given two integers $n$ and $m$, we are interested in determining chemical graphs of order $n$ and size $m$ that maximize or minimize the index. Focusing on chemical graphs with maximum degree at most 3, we show that this reduces to determining the extreme points of a polytope that contains at most 10 facets. We also show that the number of extreme points is at most 16, which means that 
for any given $n$ and $m$, there are very few different classes of
extremal graphs, independently of the chosen degree-based topological index.}

\vspace*{0.2cm}
\noindent
\emph{Keywords:} chemical graphs, degree-based topological index, extremal graphs.

\vspace*{0.2cm}
\hrule

\normalsize

\section{Introduction}

Graph-theoretic descriptors, known as topological indices, play a fundamental role in mathematical chemistry by capturing structural properties of molecules and predicting their physicochemical behaviors~\cite{Codding98, Devillers00, Devillers96, Hansch95, Karelson00, Kier76, Kier86}.
As their name suggests, these indices depend on the structure of the molecules when modeled as a connected simple undirected graph, with the vertices representing the atoms and edges representing the chemical bonds. A common class of topological indices, called \textit{degree-based topological indices} are computed as the sum of the weights of the edges of a graph, each edge 
having a weight defined by a formula that depends only on the degrees of its endpoints. As a consequence, considering the partition of the set of edges where each block consists of edges having the same two endpoints degrees, a degree-based topological index can be viewed as a linear combination of the block sizes of this partition.

One of the first and most popular degree-based topological indices is the Randi\'c connectivity index \cite{Randic1975}, introduced in 1975. Since then, numerous such indices have been defined. They were categorized for the first time as degree-based in 2013 in \cite{G13}, where the author describes and compares the most important of them, in particular regarding their correlations with physico-chemical parameters. As he claims, "To use a mild expression,
today we have far too many such descriptors, and there seems to lack a firm criterion to stop or slow down their proliferation.” In light of their links with physico-chemical properties, it is a natural question to ask what are the minimal or maximal values these indices can take. These questions have given rise to a large number of publications \cite{das2011abc,zhang2016abc,AFRG22,henning2007albertson,hansen2005albertson,cui2021ag,carballosa2022ag,VPVFS21,che2016forgotten,yan2010ga,deng2018ga,zhong2012harmonic, DENG2013,elumalai2018, das2016,sedlar2015, falahati2017,ali2023maximum,cruz2021sombor,li2022extremal,cruz2021extremal,liu2021reduced,deng2021molecular,ali2019extremal,gutman2004first,nikolic2003zagreb,furtula2010augmented, ali2021augmented,li2008survey, swartz2022survey, hansen2009variable,GFE14,liu2020some, chu2020extremal,raza2020bounds,ali2020sddi,ghorbani2021sddi,hertz2025}. In some of these papers, the index is optimized over the set of all graphs or all connected graphs. However, since degree-based topological indices concern physico-chemical properties of molecules, it is natural to narrow their study to chemical graphs that we now define.

As pointed out by Patrick Fowler \cite{PF}, and as mentioned in \cite{article33}, two definitions of chemical graphs appropriate to different kinds of carbon framework can be found in
the literature. Chemical graphs can be regarded as the skeletons of saturated hydrocarbons (such
as alkanes), which implies that the maximum degree in such graphs is at most 4. 
If instead the interest is
in (unsaturated) conjugated systems, such as alkenes, polyenes, benzenoids \cite{kwun_m-polynomials_2018}, and fullerenes \cite{shigehalli_computation_2016, sharma_degree-based_2022}, then the chemical graphs that model such compounds have maximum degree at most 3, since  a conjugated carbon atom participates in at
most three single bonds.

In this paper, we restrict our attention to chemical graphs with maximum degree at most 3. The main practical outcome of this paper, is that for any degree-based topological index and any given values $n$ and $m$, we are able to give the maximum and the minimum values that the index can take over the set of all chemical graphs with $n$ vertices and $m$ edges (provided that such  graphs exist), together with a characterization of the set of graphs reaching these optimum values.

\subsection{Our approach}

For a graph $G$ and two integers $i,j$ with $1\le i\le j$, an $ij$-edge in $G$ is an edge with endpoints of degree $i$ and $j$, and we denote  $m_{ij}$  the number of $ij$-edges in $G$.  A degree-based topological index is a linear function of the form $\sum_{i\le j}c_{ij}m_{ij}$, where $c_{ij}$ is any real number. For example, the Randi\'c index has $c_{ij}=\frac{1}{\sqrt{ij}}$. Assuming that $G$ is connected, of order at least 3, and of maximum degree at most 3, only 5 of the $m_{ij}$ values can be strictly positive, namely $m_{12}$,  $m_{13}$, $m_{22}$, $m_{23}$, and $m_{33}$. Moreover,  as will be explained in Section \ref{sec:basic}, fixing $n$ and $m$ reduces the degrees of freedom by 2 and we therefore work in a 3-dimensional space spanned by three of the five variables of interest. We have chosen $m_{12}$, $m_{13}$ and $m_{33}$, and we say that a triplet $(m_{12},m_{13},m_{33})$ is realizable for $(n,m)$ if there is at least one connected graph of order $n$, size $m$, maximum degree at most 3, and with exactly $m_{12}$ $12$-edges, $m_{13}$ $13$-edges, and $m_{33}$ $33$-edges.

The linearity property of degree-based topological indices together with the aim of finding extremal values for them hinted us to carry out a polyhedral study of chemical graphs of maximum degree at most 3. Indeed, once we know how a polyhedron looks like, it is a trivial task to optimize a linear function of the underlying variables over that polyhedron.
In our analysis, we partition the set of all possible combinations of $n$ and $m$ into 96 cases. In each of these cases, we explicitly describe the polytope $\mathcal{P}_{n,m}$ given by the convex hull of all points $(m_{12}, m_{13}, m_{33})$ that are realizable for the corresponding pair $(n,m)$. 

As will be shown all these polytopes contain at most 10 facets and at most 16 vertices. Concretely, this means that for any given $n$ and $m$, there are very few different classes of extremal graphs, independently of the chosen degree-based topological index. This provides insight into why only a few graph families are sufficient to characterize the extremal graphs of many different degree-based topological indices, as was observed in \cite{article33}.

\subsection{Structure of the paper}

Our paper is organized as follows. In Section \ref{sec:basic}, we introduce some basic definitions and notations. In Section \ref{sec:Degenerated}, we explicit all cases where the polytopes $\mathcal{P}_{n,m}$ are degenerated, i.e. not 3-dimensional. In Section \ref{sec:valid}, we give a list of 21 inequalities on $m_{12}$, $m_{13}$ and $m_{33}$, each with associated conditions on $n$ and $m$. For each of these inequalities we show that they are valid for all chemical graphs of maximum degree at most 3 with $n$ vertices and $m$ edges, such that $n$ and $m$ satisfy the associated conditions. In Section \ref{sec:realizable}, we give a list of 21 points $(m_{12},m_{13},m_{33})$, each with its associated condition on $n$ and $m$ to be realizable for $(n,m)$. In Section \ref{sec:facet}, we show that the inequalities introduced in Section \ref{sec:valid}, subject to their respective conditions on $n$ and $m$, are actually facet defining and induce a polytope (i.e., a bounded polyhedron). In Section \ref{sec:extreme}, we partition the set of all possible combinations of $n$ and $m$ into 96 cases, and for each case, we explicitly give the set of facets and the set of extreme points that define the associated polytope $\mathcal{P}_{n,m}$.
In Section \ref{sec:application}, we illustrate by an example how to apply our characterizations to find the optimal value of a degree-based topological index. Finally, we discuss in Section \ref{sec:futurework} how to extend our results to chemical graphs of maximum degree at most 4.

\section{Basics and notations}\label{sec:basic}

As mentioned in the previous section, we denote $m_{ij}$ the number of $ij$-edges in $G$. Let $n_i$ be the number of vertices in $G$ of degree $i$. A \emph{chemical graph} of order $n$ and size $m$ is a connected graph with maximum degree at most 3. If $n \le 2$ then the cliques of order 1 and 2 are the only chemical graphs of that order. Thus, in the following, we only consider chemical graphs with at least 3 vertices, which gives
\begin{align} 2 \le n - 1 \le m \le \min\left\{\left\lfloor \frac{3n}{2} \right\rfloor, \frac{n(n-1)}{2}\right\}.\label{val_n_m}
\end{align}

Since chemical graphs are connected, Inequations (\ref{val_n_m}) imply $m_{11}=0$. Hence, a chemical graph can have at most 5 nonzero $m_{ij}$ values, namely $m_{12}, m_{13}, m_{22}, m_{23}$ and $m_{33}$. We therefore have:
\begin{align}
 \label{eq_n1}  n_1 & = m_{12} + m_{13}\\[-3pt]
 \label{eq_n2}  n_2 & = \frac{m_{12} + 2 m_{22} + m_{23} }{2}\\[-3pt] 
 \label{eq_n3}  n_3 & = \frac{m_{13} + m_{23} + 2 m_{33} }{3}\\[-3pt]
\label{n}n&=n_1+n_2+n_3=\frac{3}{2} m_{12} + \frac{4}{3} m_{13} + m_{22} + \frac{5}{6} m_{23} + \frac{2}{3} m_{33}\\[-3pt]
\label{m}m&=m_{12}+m_{13}+m_{22}+m_{23}+m_{33}.
\end{align}

Note that knowing $n$, $m$ and three of the five $m_{ij}$ values, we can deduce the other two. We have chosen to describe each chemical graph using $m_{12}$, $m_{13}$ and $m_{33}$. It follows from Equations (\ref{n}) and (\ref{m}) that the other two values are obtained as follows:
\begin{align}
\label{get_x22}
m_{22} &= 6n - 5m - 4 m_{12} - 3 m_{13} + m_{33}\\[-3pt]
\label{get_x23}
m_{23} &= 6m - 6n + 3 m_{12} + 2 m_{13} - 2 m_{33}.
\end{align}
This leads to the following definitions.
\begin{defn}
	A point $(m_{12}, m_{13}, m_{33})$ is \emph{realizable} for a pair $(n, m)$ if there exists a graph of order $n$ and size $m$ with $m_{12}$ 12-edges, $m_{13}$ 13-edges and $m_{33}$ 33-edges.
\end{defn}

\begin{defn}\label{def:polytope}
	Let $n$ and $m$ be two integers satisfying~\eqref{val_n_m}. The associated polytope $\mathcal{P}_{n, m}$ is the convex hull of all points $(m_{12},m_{13},m_{33})$ that are realizable  for $(n,m)$.  
\end{defn}

Note that a point of $\mathcal{P}_{n, m}$ possibly corresponds to several chemical graphs. For example, the two non-isomorphic chemical graphs of Figure \ref{figure2} both correspond to point $(1,0,0)$ in $\mathcal{P}_{6,6}$.

\begin{figure}[!htb]
	\centering\includegraphics[scale=0.75]{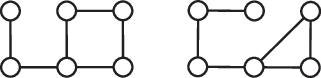}
	\vspace{-0.2cm}\caption{Two non-isomorphic chemical graphs with coordinates $(1,0,0)$ in $\mathcal{P}_{6,6}$}
	\label{figure2}
\end{figure}

In view of Definition \ref{def:polytope}, we consider that a full-dimensional polytope $\mathcal{P}_{n, m}$ is of dimension 3. Note that given a degree-based topological index $f$ with $c_{ij}$ values, we have:
\begin{align*}
    f(G)=&c_{12}m_{12}+c_{13}m_{13}+c_{22}m_{22}+c_{23}m_{23}+c_{33}m_{33}\\
    =&c_{12}m_{12}+c_{13}m_{13}+c_{22}(6n - 5m - 4 m_{12} - 3 m_{13} + m_{33})\\
    &+c_{23}(6m - 6n + 3 m_{12} + 2 m_{13} - 2 m_{33}) + c_{33} m_{33}\\
    =&(c_{12}-4c_{22}+3c_{23})m_{12}+(c_{13}-3c_{22}+2c_{23})m_{13}+(c_{22}-2c_{23}+c_{33})m_{33}\\
    &+(6n-5m)c_{22}+(6m-6n)c_{23}.
\end{align*}

Hence, maximizing or minimizing $f(G)$ over all chemical graphs of order $n$ and size $m$ is equivalent to maximizing or minimizing the linear function $c'_{12}m_{12}+c'_{13}m_{13}+c'_{33}m_{33}$ in $\mathcal{P}_{n,m}$, where \begin{itemize}[nosep]
\item $c'_{12}=c_{12}-4c_{22}+3c_{23}$,
\item $c'_{13}=c_{13}-3c_{22}+2c_{23}$,
\item $c'_{33}=c_{22}-2c_{23}+c_{33}.$
\end{itemize}

\section{Degenerated polytopes}\label{sec:Degenerated}

In this section, we describe degenerated polytopes, i.e., all polytopes of dimension $< 3$. As shown in Section \ref{sec:extreme}, the other polytopes are  full-dimensional. Some graphs mentioned in this section are shown in Figure \ref{figure6}.

\begin{figure}[!htb]	\centering\includegraphics[scale=0.73]{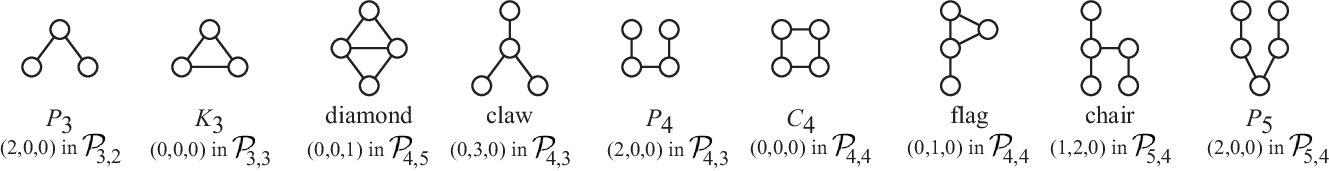}
	\vspace{-0.4cm}\caption{Some special graphs with their coordinates in a degenerated polytopes}
	\label{figure6}
\end{figure}

\subsection{0-dimensional polytopes}

$\mathcal{P}_{n,m}$ is a $0$-dimensional polytope, if it contains only one point. This is the case for $(n,m)=(3,2), (3,3)$ and $(4,5)$. Indeed, 
$P_3$ is the only point with coordinates $(2,0,0)$ in $\mathcal{P}_{3,2}$,
$K_3$ is the only point with coordinates $(0,0,0)$ in $\mathcal{P}_{3,3}$,
and the diamond is the only point with coordinates $(0,0,1)$ in $\mathcal{P}_{4,5}$.
Other 0-dimensional polytopes appear when $m = \lfloor \frac{3n}{2} \rfloor$:
\begin{itemize}[nosep]
	\item if $n$ is even, then the only point in $\mathcal{P}_{n,m}$ is $(0, 0, \frac{3n}{2})$, and all connected cubic graphs of order $n$ have these coordinates.
	\item  if $n$ is odd, then the only point in $\mathcal{P}_{n,m}$ is $(0, 0, \frac{3n-5}{2})$, and all connected graphs of order $n$ with one vertex of degree 2 and $n-1$ vertices of degree 3 have these coordinates. They can be obtained from a cubic graph with $n-1$ vertices by subdividing one of its edges into two edges.
\end{itemize}

\subsection{1-dimensional polytopes}

The polytope $\mathcal{P}_{n,m}$ has dimension $1$, if all its realizable points are aligned. In our case, all such polytopes contain exactly two realizable points. This happens when $(n,m)=(4,3),(4,4)$ or $(5,4)$. More precisely,
\begin{itemize}[nosep]
    \item if $(n,m) = (4,3)$, then
    \begin{itemize}[nosep]
        \item $(0, 3, 0)$ and $(2, 0, 0)$ are the only points in $\mathcal{P}_{4,3}$, and the only chemical graphs with these coordinates are the claw and $P_4$, respectively;
        \item $\mathcal{P}_{4,3}$ can be defined with the two equalities $m_{33} = 0$ and $3 m_{12} + 2 m_{13} = 6$, and the inequalities $0 \le m_{12} \le 2$;
    \end{itemize}
    \item if $(n,m) = (4,4)$, then
    \begin{itemize}[nosep]
        \item $(0, 1, 0)$ and $(0, 0, 0)$ are the only points in $\mathcal{P}_{4,4}$, and the only chemical graphs with these coordinates are the flag and $C_4$, respectively;
        \item $\mathcal{P}_{4,4}$ can be defined with the two equalities $m_{12} = 0$ and $m_{33} = 0$, and the inequalities $0 \le m_{13} \le 1$;
    \end{itemize}
    \item if $(n,m) = (5,4)$, then 
    \begin{itemize}[nosep]
        \item $(1, 2, 0)$ and $(2, 0, 0)$ are the only points in $\mathcal{P}_{5,4}$, and the only chemical graphs with these coordinates are the chair and $P_5$, respectively;
        \item $\mathcal{P}_{5,4}$ can be defined with the two equalities $m_{33} = 0$ and $2 m_{12} + m_{13} = 4$, and the inequalities $0 \le m_{13} \le 2.$
    \end{itemize}
\end{itemize}

\subsection{2-dimensional polytopes}

If $\mathcal{P}_{n,m}$ is a $2$-dimensional polytope, it can be described as the intersection of a plane with at least 3 facets defined by inequalities. This happens when $(n,m)=(5,6)$ or when $n \ge 6$ is even and 
$m = \frac{3n-2}{2}$.  The three points in $\mathcal{P}_{5,6}$ are shown in Figure \ref{figure3}, while when $n \ge 6$ is even and $m=\frac{3n-2}{2}$, the three points are:
\begin{itemize}[nosep]
    \item $(0,1,m-1)$, when we have one vertex of degree 1 and all others of degree 3. Such a graph is obtained from a cubic graph with $n-2$ vertices by subdividing an edge in two, and adding a pendant vertex to the newly created vertex;
    \item $(0,0,m-3)$, when we have 2 adjacent vertices of degree 2 and $n-2$ vertices of degree 3. Such a graph is obtained from a cubic graph with $n-2$ vertices by 
    subdividing an edge in three;
    \item $(0,0,m-4)$, when we have 2 non-adjacent vertices of degree 2 and $n-2$ vertices of degree 3. Such a graph is obtained from a cubic graph with $n-2$ vertices by subdividing two edges in two.
\end{itemize}

These 2-dimensional polytopes $\mathcal{P}_{n,m}$ are triangles defined by the equality $m_{12} = 0$ and by the three inequalities $m_{13} \ge 0$, $-3 m_{13} + m_{33} \ge 5m - 6n$, and $2 m_{13} - m_{33} \ge 6n - 5m - 1$. It is not difficult to check that the ends of the triangle are the points defined above, and are the only realizable points of $\mathcal{P}_{n,m}$.

\begin{figure}[!htb]
	\centering\includegraphics[scale=0.75]{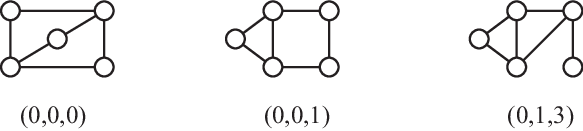}
	\vspace{-0.2cm}\caption{The three chemical graphs of order 5 and size 6, with their coordinates in  $\mathcal{P}_{5,6}$}
	\label{figure3}
\end{figure}

\section{Twenty-one valid inequalities }\label{sec:valid}

In view of the previous section, full-dimensional polytopes appear when $(n,m)=(5,5)$ and when $n\ge 6$ and $m \le  \lfloor\frac{3n-3}{2} \rfloor .$ We will analyze the full-dimensional polytopes with $m<12$ in Section \ref{sec:extreme}, and we therefore assume here that \begin{align}\max\{12,n-1\}\le m\leq\left\lfloor\frac{3n-3}{2} \right\rfloor. \label{m>12}\end{align}

Note that this implies $n \ge 9$. In this section we give 21 constraints that all have the form of an inequality that is imposed when certain conditions (in addition to condition (\ref{m>12})) on the order $n$ and the size $m$ of a chemical graph $G$ are satisfied. These constraints are shown in Table~\ref{constraints}. The first column indicates the Id of the constraint, the second column gives the inequalities, and the third one gives conditions on $n$ and $m$ (which have to be satisfied in addition to (\ref{m>12})).  Note that inequality \G{7bis} uses $m_{23}$ instead of $m_{33}$. This is because it is easier to read (there is no quadratic term) when compared to the inequality we would have by using $m_{12}$, $m_{13}$ and $m_{33}$ which is:

\begin{equation}\label{7bis}6 m^2 +  9 n^2 - 15 nm - 2 m + 3n \le (3n - 2m) (2m_{12} + m_{13} -m_{33})+2m_{13}.
\end{equation}

\begin{table}[h!]
	\centering
	\caption{Twenty-one constraints for chemical graphs satisfying (\ref{m>12})}\label{facets}~\\
		\begin{tabular}{|@{\hskip 1pt}c@{\hskip 1pt}|@{\hskip 1pt}r@{\hskip 1pt}c@{\hskip 1pt}l|c|}\hline\label{constraints}
			Id&\multicolumn{3}{c|}{Inequalities}&Conditions on $n$ and $m$\\\hline
		\G{2}		&		$0$		&$\le$&$ m_{12}$ 										&no condition\\
		\G{3}		&		$0$		&$\le$&$ m_{13}$ 										&no condition\\
		\G{5}		&$0$			&$\le$&$ m_{33}$   									&$m {\le} \frac{6n{-}4}{5}$\\
		\G{9}		&$2m{-}3n{+}2$	&$\le$&$ {-}3m_{12}$   								&$(m\bmod 3) {=} 2$ \\
		\G{8}		&$4m{-}6n{+}2$	&$\le$&${-}6 m_{12} {-} 3m_{13}$   					&$(m\bmod 3) {=} 1$ \\
		\G{1}		&	$5m {-} 6n$	&$\le$&$ {-}4 m_{12}{-}3m_{13}{+}m_{33}$				&no condition\\
		\G{11}		&$5m{-}6n{+}2$	&$\le$&${-}3m_{12}{-}3m_{13}{+}3m_{33}$				&$(m {\le} \frac{6n{-}5}{5})\land(m \bmod 3) {=} 2$\\
		\G{14}	&$5m{-}6n{+}2$	&$\le$&${-}4m_{12}{-}2m_{13}{+}2m_{33}$				&$(m {\le} \frac{6n-2}{5})\land(m {-} 2n \bmod 4) {=} 2$\\	
		\G{13}	&$5m{-}6n{+}3$	&$\le$&${-}4m_{12}{+}4m_{33}$ 							&$(m {\le} \frac{6n-3}{5})\land(m {-} 2n \bmod 4) {=} 1$\\
		\G{12}		&$15m{-}18n{+}3$&$\le$&${-}12m_{12}{-}8m_{13}{+}4m_{33}$				&$(m {\le} \frac{6n-1}{5})\land(m {-} 2n \bmod 4) {=} 3$\\
		\G{10}		&$10m{-}12n{+}2$&$\le$&${-}6m_{12}{-}6m_{13}{+}3m_{33}$				&$(m {\le} \frac{6n{-}4}{5})\land (m \bmod 3) {=} 1$\\
		\G{4}	&$3n {-} 3m$	&$\le$&$ m_{12} {+} m_{13} {-} m_{33}$					& $n{\le}m {\le} \frac{3n-6}{2}$ \\
		\G{6}		&$3n{-}3m{+}1$	&$\le$&$ 2m_{12}{+}m_{13}{-}m_{33}$ 					 &$(m \neq n) \land(n \bmod 2) {=} 1$\\
		\G{18}	&$0$			&$\le$&$ 2(n{-}2)m_{12}{+}(n{-}3)m_{13}{-}(n{-}1)m_{33}$ &$(m{=}n) \land(n \bmod 2) {=} 1$\\
		\G{7}		&${-}2n {+} 10$	&$\le$&$ 2(n{-}4)m_{12}{+}(n{-}2)m_{13}{-}(n{-}4)m_{33}$&$(m{=}n{+}1) \land(n \bmod 2) {=} 0$\\
		\G{7bis}	&$6n{-}4m$		&$\le$&$ (3n{-}2m)(m_{12}{+}m_{23}){+}4m_{13}$ 		&($m {\ge} n {+} 2) \land(n \bmod 2) {=} 0$\\
		\G{17}	&$4n{-}16$		&$\le$&$ 2(n{-}4) m_{12}{+}(n{-}4)m_{13}{-}(n{-}6)m_{33}$&$(m {=} n {-} 1) \land(n \bmod 2) {=} 0$\\
		\G{19}	&$2n{-}18$		&$\le$&$ (n{-}9)m_{12}{+}(n{-}6)m_{13}{-}(n{-}6)m_{33}$ &$(m {=} n {-} 1) \land(n \bmod 3) {=} 0$\\
		\G{20}		&$4n{-}32$		&$\le$&$ (2n{-}16)m_{12}{+}(2n{-}13)m_{13}{-}(2n{-}10)m_{33}$ &$(m{=}n{-}1) \land(n \bmod 3) = 2$ \\
		\G{15}	&$2n {-} 14$	&$\le$&$ (n{-}7)m_{12}{+}(n{-}6)m_{13}{-}(n{-}4)m_{33}$&$m {=} n {-} 1$\\
        		\G{16}		&$-1$			&$\le$&$ 2m_{12}{+}2m_{13}{-}m_{33}$					&$m{=}n{+}1$ \\\hline
	\end{tabular}
	\end{table}

The inequalities in Table \ref{facets} are named F$i$ $(i=1,\ldots,21)$, the reason for choosing the letter `F' being that, as we will see in Section \ref{sec:facet}, these inequalities are facet defining.
In this section, we only prove that they are all valid. In other words, a chemical graph of order $n$ and size $m$ necessarily satisfies inequality F$i$ if $n$ and $m$ satisfy the conditions associated with F$i$. Note that the inequalities F$i$ are possibly valid for other values of $n$ and $m$ but, as we will see in Section \ref{sec:facet}, the inequalities are then not facet defining.

The following theorem gives a proof that the 21 inequalities in Table \ref{facets} are valid. Before we give this proof, it is useful to note that 
Equations (\ref{eq_n1}), (\ref{eq_n3}), (\ref{n}) and (\ref{m}) imply
\begin{align}\label{n1n3}
n_1&=n_3+2(n-m).
\end{align}
Also, given any two integers $a$ and $b$, it directly follows from (\ref{n}) and (\ref{m}) that
\begin{align}\label{ab}
am+bn&=(a+\frac{3b}{2})m_{12}
+(a+\frac{4b}{3})m_{13}
+(a+b)m_{22}
+(a+\frac{5b}{6})m_{23}
+(a+\frac{2b}{3})m_{33}.
\end{align}
Note finally that $2m=3n-2n_1-n_2=3n-2m_{12}-2m_{13}-n_2$. Hence, 
\begin{align}m \equiv 2m_{13}+(n_2-m_{12})(\bmod 3)\label{mmod3}.
\end{align}

\begin{thm}
	If $n$ and $m$ satisfy condition (\ref{m>12}) as well as the conditions associated with a \emph{F}$i$ ($1\le i\le 21$), then all chemical graphs of order $n$ and size $m$ satisfy inequality \emph{F}$i$.
\end{thm}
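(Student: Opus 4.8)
The plan is to reduce all $21$ inequalities to three recurring arguments, after recording a few structural facts used repeatedly. At the outset I would establish: (i) the five nonnegativities $m_{12},m_{13},m_{22},m_{23},m_{33}\ge 0$, reading $m_{22},m_{23}$ off \eqref{get_x22}--\eqref{get_x23}; (ii) the bound $m_{12}\le n_2$, valid because in a connected graph on $n\ge 4$ vertices no degree-$2$ vertex is adjacent to two leaves (otherwise it forms a $P_3$ component), so each degree-$2$ vertex lies on at most one $12$-edge; (iii) the cyclomatic bound $\operatorname{cyc}(G_3)\le\operatorname{cyc}(G)=m-n+1$, where $G_3$ is the subgraph induced by the degree-$3$ vertices, because the cycle space of a subgraph embeds into that of $G$; and (iv) the parities $n_2\equiv n\pmod 2$ and $n_1\equiv n_3\pmod 2$, following from \eqref{n1n3} and from $n_1+2n_2+3n_3=2m$ together with $n=n_1+n_2+n_3$. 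With these in hand, inequalities \G{2}, \G{3}, \G{5} and \G{1} are, after \eqref{get_x22}, exactly $m_{12}\ge 0$, $m_{13}\ge 0$, $m_{33}\ge 0$ and $m_{22}\ge 0$, so they need nothing beyond (i).

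For the inequalities carrying a congruence condition (\G{9},\G{8},\G{10},\G{11} with $m\bmod 3$, and \G{14},\G{13},\G{12} with $m-2n\bmod 4$) I would first prove a plain base inequality $L\ge R_0$, with $L$ an integer linear form in $m_{12},m_{13},m_{33}$ built from the facts above, and then sharpen it. For instance \G{9} reads $3m_{12}\le 3n-2m-2$, which using the identity $2m=3n-2m_{12}-2m_{13}-n_2$ noted before \eqref{mmod3} becomes $2m_{13}+n_2-m_{12}\ge 2$; the base bound $2m_{13}+n_2-m_{12}\ge 0$ is immediate from $m_{12}\le n_2$, while \eqref{mmod3} gives $2m_{13}+n_2-m_{12}\equiv m\equiv 2\pmod 3$, so the quantity, being nonnegative and $\equiv 2$, is at least $2$. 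The remaining congruence cases follow the same template: a nonnegative (or suitably bounded-below) integer form, pinned to a fixed residue modulo $3$ by \eqref{mmod3} or modulo $4$ by a parity analysis of the degree-$2$ vertices, must jump to the next admissible value -- precisely the $+2$ or $+3$ appearing on the left-hand sides. The identity \eqref{ab} is convenient here for keeping the base forms linear.

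The remaining inequalities are those whose validity is controlled by the cyclomatic number. Writing (iii) as $n_3-m_{33}=c_3-\operatorname{cyc}(G_3)\ge n-m$ (with $c_3\ge 1$ the number of components of $G_3$ when $n_3\ge 1$) and combining with $n_1=n_3+2(n-m)$ from \eqref{n1n3} yields \G{4} at once; the companion bound $m_{33}\le n_3-1+\operatorname{cyc}(G)$ settles \G{16} as soon as $n_3$ is large, the few small-$n_3$ configurations being checked directly. The genuinely delicate inequalities are \G{6}, \G{7}, \G{7bis} and the family \G{15},\G{17},\G{19},\G{20} attached to $m=n-1$ (trees) and \G{18} attached to $m=n$ (unicyclic graphs). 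Their $n$-dependent coefficients force me to use the actual shape of a maximum-degree-$3$ graph in the sparse regime -- a degree-$3$ core joined by paths of degree-$2$ vertices carrying the pendant leaves -- and to bound the weighted count by analysing how many degree-$2$ vertices can lie on such paths and how leaves attach, before verifying tightness on the extremal family. I expect this to be the main obstacle: each such inequality seems to require its own counting argument, refined by the parity of $n$ (whence the even/odd splits and the separate residues modulo $3$ in \G{19},\G{20}), rather than a single uniform computation. This is presumably why \G{7bis} is stated with $m_{23}$: replacing $m_{33}$ removes the quadratic term \eqref{7bis} and keeps the bookkeeping linear.

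Finally, I would organise the write-up so that the shared facts (i)--(iv) are proved once, the immediate and modular inequalities are dispatched in a line or two each, and the bulk of the effort is spent on the near-tree inequalities, split according to the cyclomatic number of $G$ (the cases $m=n-1$, $m=n$, $m=n+1$, and $m\ge n+2$).
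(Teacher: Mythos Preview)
Your treatment of the trivial and congruence-based inequalities (\G{2}, \G{3}, \G{5}, \G{1} via nonnegativity; \G{9}, \G{8}, \G{10}, \G{11}, \G{14}, \G{13}, \G{12} via a base bound plus a residue jump) matches the paper's approach closely, and your explicit isolation of the fact $m_{12}\le n_2$ is a clean way to phrase what the paper uses implicitly. Your cyclomatic argument for \G{4} is genuinely different from the paper's: the paper instead observes that when $G$ is not a path one has $m_{12}\le m_{23}$ (every pendant path of degree-$2$ vertices contributes one $12$-edge and one $23$-edge), and then reads \G{4} off from \eqref{ab}. Your route via $\operatorname{cyc}(G_3)\le\operatorname{cyc}(G)$ is correct and arguably more conceptual; the paper's $m_{12}\le m_{23}$ is sharper and gets reused later. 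Similarly, for \G{16} the paper does not use your cyclomatic bound but runs a short extremal argument: among minimizers of $2m_{12}+2m_{13}-m_{33}$ take one with fewest leaves, reroute a pendant path to kill a leaf without raising the objective, and conclude $n_1=0$, hence $n_3=2$ and $m_{33}\le 1$.

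The genuine gap in your proposal is the block \G{6}, \G{18}, \G{7}, \G{7bis}, \G{17}, \G{19}, \G{20}, \G{15}. Your description (``degree-$3$ core joined by paths, bound the weighted count'') does not isolate the two devices the paper actually uses, and without them you will likely get stuck. The first device is that $m_{12}+m_{23}$ is even (immediate from $2n_2=m_{12}+2m_{22}+m_{23}$); the paper rewrites each of \G{6}, \G{18}, \G{7}, \G{7bis}, \G{17} as an inequality in $\tfrac{m_{12}+m_{23}}{2}$ plus a small residual, and then disposes of the cases $m_{12}+m_{23}\in\{0,2\}$ by hand (here the fact $m_{12}\le m_{23}$ for non-paths is used again). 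This is short once seen, but is not what your ``counting on pendant paths'' plan describes. The second device, used for \G{20} and \G{15} (and for \G{16} as noted above), is a graph-modification argument: assume $G$ minimizes the relevant linear form, exhibit a local rewiring that strictly decreases it unless $G$ has no internal degree-$2$ chain between degree-$3$ vertices, and then evaluate the form on that reduced family. Your proposal does not mention this technique, and a direct counting argument for \G{20} or \G{15} would be considerably more painful. I would recommend you add the parity of $m_{12}+m_{23}$ and the inequality $m_{12}\le m_{23}$ to your toolkit (iv), and plan on an extremal-rewiring lemma for the tree inequalities rather than a static count.
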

\begin{proof}Let $n$ and $m$ be two integers satisfying condition (\ref{m>12}), and let $G$ be a chemical graph of order $n$ and size $m$, with $m_{ij}$ $ij$-edges.
		\begin{itemize}\setlength\itemsep{-1pt}
	\item \G{2}, \G{3} and \G{5} are clearly valid (even if $m>\frac{6n-4}{5}$ for \G{5}).
	\item If $m\bmod 3=2$, we know from (\ref{mmod3}) that $(2m_{13}+(n_2-m_{12}))\bmod 3=2$. Hence, $2m_{13}+(n_2-m_{12})\geq 2$ and \G{9} is valid since
	$$2m=3n-3m_{12}-(2m_{13}+(n_2-m_{12}))\leq 3n-3m_{12}-2.$$
		\item If $m\bmod 3=1$, we know from (\ref{mmod3}) that $m_{13}+2(n_2-m_{12})\bmod 3=2$. Hence, $m_{13}+2(n_2-m_{12})\geq 2$ and \G{8} is valid since (\ref{ab}) gives
		\begin{align*}
		6n-4m&=5m_{12}+4m_{13}+2m_{22}+m_{23}\\
		&=6m_{12}+3m_{13}+(m_{13}+2m_{22}+m_{23}-m_{12})\\
		&=6m_{12}+3m_{13}+(m_{13}+2(n_2-m_{12}))\\&\geq 6m_{12}+3m_{13}+2.
		\end{align*}
	\item \G{1} is valid since (\ref{ab}) gives
	$$
	5m-6n=-4m_{12}-3m_{13}-m_{22}+m_{33}\leq -4m_{12}-3m_{13}+m_{33}.
	$$
\item 
If $m \bmod 3=2$ then $m_{12}+m_{22}+2m_{33}\geq 2$. Indeed, this is obvious if $m_{33}\geq 1$ while for $m_{33}=0$, we have $m_{12}+m_{22}\geq 2$ since (\ref{get_x22}) gives $m_{13}=\frac{6n-5m-(4m_{12}+m_{22})}{3}$, which implies $m\bmod 3=(m_{12}+m_{22})\bmod 3=2$. Hence, \G{11} is valid since (\ref{ab})  gives   
\begin{align*}
5m-6n&=-4m_{12}-3m_{13}+m_{33}-m_{22}\\
&=-3m_{12}-3m_{13}+3m_{33}-(m_{12}+m_{22}+2m_{33})\\
&\leq-3m_{12}-3m_{13}+3m_{33}-2.	
\end{align*}		
\item If $(m-2n) \bmod 4=2$ then $m_{13}+m_{22}+m_{33}\geq 2$. Indeed, 3 times (\ref{get_x22}) plus 4 times (\ref{get_x23}) gives
$m_{23}=\frac{9m-6n-(m_{13}+3m_{22}+5m_{33})}{4}$, which implies $(m-2n)\bmod 4 = (m_{13}+3m_{22}+m_{33})\bmod 4=2$. Hence, \G{14} is valid since (\ref{ab}) gives
\begin{align*}
5m-6n&=-4m_{12}-3m_{13}+m_{33}-m_{22}\\
&=-4m_{12}-2m_{13}+2m_{33}-(m_{13}+m_{22}+m_{33})\\
&\leq-4m_{12}-2m_{13}+2m_{33}-2.
\end{align*}
\item If $(m-2n) \bmod 4=1$ then  $3m_{13}+3m_{33}+m_{22}\geq 3$. Indeed, this is obvious if $m_{13}+m_{33}\geq 1$ while for $m_{13}=m_{33}=0$ we have $m_{22}\geq 3$ since, as in the previous case, we deduce from  (\ref{get_x22}) and (\ref{get_x23}) that $m_{23}=\frac{9m-6n-3m_{22}}{4}$, which implies  $(m-2n)\bmod 4 = 3m_{22}\bmod 4=1$. Hence, \G{13} is valid since (\ref{ab})  gives
\begin{align*}
5m-6n&=-4m_{12}-3m_{13}+m_{33}-m_{22}\\
&=-4m_{12}+4m_{33}-(3m_{13}+3m_{33}+m_{22})\\
&\leq-4m_{12}+4m_{33}-3.
\end{align*}
\item If $(m-2n) \bmod 4=3$ then $m_{13}+m_{33}+3m_{22}\geq 3$. Indeed, this is obvious if $m_{22}\geq 1$ while for $m_{22}=0$ we have $m_{13}+m_{33}\geq 3$ since we deduce from (\ref{get_x22}) and (\ref{get_x23}) that $m_{23}=\frac{9m-6n-(m_{13}+5m_{33})}{4}$, which implies $(m-2n)\bmod 4 =(m_{13}+m_{33})\bmod 4=3$.
Hence, \G{12} is valid since (\ref{ab}) gives
\begin{align*}
15m-18n&=-12m_{12}-9m_{13}+3m_{33}-3m_{22}\\
&=-12m_{12}-8m_{13}+4m_{33}-(m_{13}+m_{33}+3m_{22})\\
&\leq-12m_{12}-8m_{13}+4m_{33}-3.
\end{align*}
\item If $m \bmod 3=1$ then $2m_{12}+2m_{22}+m_{33}\geq 2$. Indeed, this is obvious if $m_{12}+m_{22}\geq 1$ while for $m_{12}=m_{22}=0$ we must have $m_{33}\geq 2$ since (\ref{get_x22}) gives $m_{13}=\frac{6n-5m+m_{33}}{3}$, which implies $(m+m_{33}) \bmod 3 = (1+m_{33})\bmod 3 =0$. Hence, \G{10} is valid since (\ref{ab}) gives
\begin{align*}
10m-12n&=-8m_{12}-6m_{13}-2m_{22}+2m_{33}\\
&=-6m_{12}-6m_{13}+3m_{33}-(2m_{12}+2m_{22}+m_{33})\\
&\leq-6m_{12}-6m_{13}+3m_{33}-2.
\end{align*}
\item \G{4} is valid since $m>n-1$ implies that $G$ is not a path and we therefore have $m_{12}\leq m_{23}$, and (\ref{ab}) then gives
	$$3n{-}3m=\frac{3}{2}m_{12}{+}m_{13}{-}\frac{1}{2}m_{23}{-}m_{33}=m_{12}{+}m_{13}{-}m_{33}{+}\frac{m_{12}{-}m_{23}}{2}\leq m_{12}{+}m_{13}{-}m_{33}.
	$$
\item If $n$ is odd, then $G$ has at least one vertex of degree 2 and if in addition $m\neq n$,  $m_{12}+m_{23}\geq 2$ else $G$ is a cycle with $m=n$. This last inequality is equivalent to $\frac{m_{12}-m_{23}}{2}\leq m_{12}-1$. Hence, \G{6} is valid since (\ref{ab}) gives
	$$3n{-}3m=\frac{3}{2}m_{12}{+}m_{13}{-}\frac{1}{2}m_{23}{-}m_{33}=m_{12}{+}m_{13}{-}m_{33}{+}\frac{m_{12}{-}m_{23}}{2}\leq 2m_{12}{+}m_{13}{-}m_{33}-1.
		$$
\item If $n=m$, then we can assume that $G$ is not a cycle since \G{18} is clearly valid when $m_{12}=m_{13}=m_{33}=0$. Equation (\ref{n1n3}) gives $n_1=n_3$, which is equivalent to $m_{13}=\frac{n-n_2-2m_{12}}{2}$. We know from (\ref{ab}) that $3n-3m=0=\frac{3m_{12}-m_{23}}{2}+m_{13}-m_{33}$, which is equivalent to 
$2m_{12}+m_{13}-m_{33}=  \frac{m_{12}+m_{23}}{2}$ and $m_{12}-m_{33}=\frac{m_{23}-m_{12}}{2}-m_{13}=\frac{m_{12}+m_{23}+n_2-n}{2}$. Hence, proving \G{18} is equivalent to proving
\begin{align*}
0&\leq\frac{n-3}{2}(2m_{12}+m_{13}-m_{33})+m_{12}-m_{33}\\
&=\frac{n-3}{2}\left(\frac{m_{12}+m_{23}}{2}\right)+\frac{m_{23}+m_{12}+n_2-n}{2}.
\end{align*}
If $n=m$ is an odd number, then $n_2\geq 1$, which implies $m_{12}+m_{23}\geq 2$. Hence, $\frac{n-3}{2}\left(\frac{m_{12}+m_{23}}{2}\right)+\frac{m_{23}+m_{12}+n_2-n}{2}\geq \frac{n-3}{2}{-}\frac{n-3}{2}{=}0$, and \G{18} is therefore valid.

\item If $m=n+1$, we know from (\ref{ab}) that $3n-3m=-3=\frac{3m_{12}-m_{23}}{2}+m_{13}-m_{33}$, which is equivalent to 
 $2m_{12}+m_{13}-m_{33}+2=\frac{m_{12}+m_{23}}{2}-1$.
Proving the validity of \G{7} is therefore equivalent to proving
\begin{align*}
1&\leq \frac{n-4}{2}(2m_{12}+m_{13}-m_{33}+2)+m_{13}\\ &=\frac{n-4}{2}\left(\frac{m_{12}+m_{23}}{2}-1\right)+m_{13}.
\end{align*}
Note that $m_{12}+m_{23}$ is an even number. Hence, we have one of the five following cases:
\begin{itemize}[nosep]
	\item if $m_{12}+m_{23}\geq 4$, then the inequality is  clearly satisfied.
	\item if $m_{12}=m_{23}=1$, then $m_{13}\geq 1$, else $m_{33}=4$ and $n_3=3$ (by Equation \ref{eq_n3}), which is impossible. Hence $\frac{n-4}{2}\left(\frac{m_{12}+m_{23}}{2}-1\right)+m_{13}=m_{13}\geq 1$.
	\item if $m_{12}=0$ and $m_{23}=2$, then $m_{13}\geq 1$, else $m_{33}=2$ and $n_3=2$, which is impossible. Hence $\frac{n-4}{2}\left(\frac{m_{12}+m_{23}}{2}-1\right)+m_{13}=m_{13}\geq 1$.
    \item if $m_{12}=2$ and $m_{23}=0$, then $G$ is a path, a contradiction with $m=n+1$.
	\item if $m_{12}=m_{23}=0$, then $n_2=0$ and Equation (\ref{n1n3}) implies $m_{13}=\frac{n-2}{2}$. Hence $\frac{n-4}{2}\left(\frac{m_{12}+m_{23}}{2}-1\right)+m_{13}=-\frac{n-4}{2}+\frac{n-2}{2}= 1$.
\end{itemize}
\item To prove \G{7bis} we analyze the possible values of $m_{12}+m_{23}$ which is an even number.
\begin{itemize}\item If  $m_{12}+m_{23}\geq 2$ then  $(3n-2m) (m_{12}+m_{23}) +4 m_{13} \ge 2(3n-2m)=6n-4m.$
\item If $m_{12}+m_{23}=0$ then $m_{22}=0$ (since $m\neq n$). Hence $n_2=0$, which implies $m_{13}=\frac{3n-2m}{2}$. Therefore, $(3n-2m)(m_{12}+m_{23}) + 4m_{13}=2(3n-2m)=6n-4m$. 
\end{itemize}

\item If $m{=}n{-}1$, then we know from (\ref{ab}) that $6n-6m=6=3m_{12}+2m_{13}-m_{23}-2m_{33}$, which is equivalent to $2m_{12}+m_{13}-m_{33}-4=\frac{m_{12}+m_{23}}{2}-1$. Hence, proving \G{17} is equivalent to proving \begin{align*}
4&\leq\frac{n-6}{2}(2m_{12}+m_{13}-m_{33}-4)+2m_{12}+m_{13}\\
&=\frac{n-6}{2}\left(\frac{m_{12}+m_{23}}{2}-1\right)+2m_{12}+m_{13}.
\end{align*}
\begin{itemize}[nosep]
	\item if $m_{12}\geq 2$ then the inequality is clearly satisfied;
	\item if $m_{12}=1$, then $G$ is not a chain, which means that $m_{23}\geq 1$ and $n_1\geq 3$, which implies  $m_{13}\geq 2$. Hence, $\frac{n-6}{2}\left(\frac{m_{12}+m_{23}}{2}-1\right)+2m_{12}+m_{13}\geq 4$;
	\item if $m_{12}=0$ then $m_{23}$ is even and $m_{13}\geq 4$ (since $n>4$). Hence, the inequality is clearly satisfied if $m_{23}\geq 2$, while for $m_{23}{=}0$, we have $n_2{=}0$ and Equation (\ref{n1n3}) implies $m_{13}=\frac{n+2}{2}$, which gives
	$\frac{n-6}{2}\left(\frac{m_{12}+m_{23}}{2}-1\right)+2m_{12}+m_{13}=-\frac{n-6}{2}+\frac{n+2}{2}=4.$
\end{itemize}

\item If $m=n-1$, then we can assume $n_3>0$, else $m_{12}=2$ and $m_{13}=m_{33}=0$, which implies that inequality \G{19} is valid. We know from (\ref{ab}) that $3n-3m=3=\frac{3m_{12}-m_{23}}{2}+m_{13}-m_{33}$. Hence, $m_{12}+m_{13}-m_{33}=3+\frac{m_{23}-m_{12}}{2}$ which implies that $m_{33}\le m_{12}+m_{13}-3$. Therefore, proving \G{19} is equivalent to proving 
\begin{align*}
0&\leq(n-9)(m_{12}+m_{13}-m_{33}-2)+3(m_{13}-m_{33})\\
&=(n-9)\left(\frac{m_{23}-m_{12}}{2}+1\right)+3(m_{13}-m_{33}).
\end{align*}
We know from Equation (\ref{n1n3}) that $n_1{=}n_3{+}2$, which implies $\frac{3}{2}m_{12}{=}\frac{n+2-(n_2-m_{12})}{2}{-}m_{13}{\leq }\frac{n+2}{2}$. Hence, $m_{12}{\leq} \frac{n+2}{3}$, and assuming $n\bmod 3{=}0$ implies $m_{12}\leq \frac{n}{3}$. Since $m_{23}\geq m_{12}$ and $m_{13}-m_{33}\geq 3-m_{12}$, we have
\begin{align*}
(n-9)\left(\frac{m_{23}-m_{12}}{2}+1\right)+3(m_{13}-m_{33})&\geq (n-9)+3(3-m_{12})\\&\geq (n-9)+3(3-\frac{n}{3})=0.\end{align*}

\item If $m=n-1$, then as in the previous case, we know that $m_{12}+m_{13}-m_{33}=3+\frac{m_{23}-m_{12}}{2}$. Hence, proving \G{20} is equivalent to proving 
\begin{align*}
0&\leq(2n-16)(m_{12}+m_{13}-m_{33}-2)+3(m_{13}-2m_{33})\\
&=(2n-16)\left(\frac{m_{23}-m_{12}}{2}+1\right)+3m_{13}-6m_{33}.
\end{align*}
Let  $f(G)=(2n-16)\left(\frac{m_{23}-m_{12}}{2}+1\right)+3(m_{13}-2m_{33})$. We have to prove that if $G$ is a tree of order $n$ with $(n\bmod 3)=2$, then $f(G)\geq 0$.
We may assume $m_{33}>0$ and $n_3>0$ since $f(G)\geq 0$ when $n_3=0$ or $m_{33}=0$. 
So assume $G$ minimizes $f$ over  all trees of order $n$ with $(n\bmod 3)=2$, $n_3>0$ and $m_{33}>0$. It remains to prove that $f(G)\ge 0$.

Note first that $G$ does not contain a chain $[v_1,v_2,\ldots,v_p]$ ($p \ge 3$) with $v_1$ and $v_p$ of degree 3 and the other vertices of degree 2. Indeed if such a chain exists then let $uw$ be an edge in $G$ with $u$ of degree 1 and let $G'$ be the tree obtained from $G$ by removing $uw$, $v_1v_2$, $v_{p-1}v_p$ and adding $uv_2$, $wv_{p-1}$, $v_1v_p$. Then $f(G')=f(G)-(2n-16)-6<f(G)$ if $w$ has degree 2, and $f(G')=f(G)-(2n-16)-9<f(G)$ if $w$ has degree 3. Both cases contradict the fact that $G$ minimizes $f$.

So $m_{12}=m_{23}$, which implies $n_1=m_{12}+m_{13}=m_{33}+3$. Hence, $m_{22}=m-(m_{12}+m_{13})-m_{23}-m_{33}=m-6-3m_{33}+m_{13}$, which implies $3m_{33}=n-7-m_{22}+m_{13}$. Note also that since $n=3m_{33}+7+m_{22}-m_{13}$ and $n\bmod 3=2$, we have $(m_{22}-m_{13})\bmod 3=1$. Hence, $m_{13}+2m_{22}\ge 2$. Indeed, this is clearly true if $m_{22}\ge 1$, while $m_{22}=0$ implies $m_{13}\ge 2$. Therefore
\begin{align*}
f(G)&=2n-16+3m_{13}-2(n-7-m_{22}+m_{13})\\
&=m_{13}+2m_{22}-2\ge 0.
\end{align*}

\item Let $f(G)= (n-7)(m_{12}+m_{13}-m_{33}-2)+m_{13}-3m_{33}$. Proving the validity of \G{15} is equivalent to proving that $f(G)\geq 0$ for all trees $G$. If $G$ is a path, then $f(G)=0$. So let $G$ be a tree with $n_3>0$ and assume that $G$ has minimum value among all trees having at least one vertex of degree 3.

We first prove that $G$ does not contain any chain $[v_1,v_2,\ldots,v_p]$ with $v_1$ and $v_p$
of degree 3 and all other vertices of degree 2.
Consider any vertex $u$ of degree 1, let $w$ be its neighbor, and  let $G'$ be the tree obtained from $G$ by removing the edges $uw$, $v_1v_2$, $v_{p-1}v_p$ and adding the edges $uv_2$, $wv_{p-1}$, $v_1v_p$. Then $f(G'){=}f(G){-}(n{-}7){-}4$ if $w$ has degree 3, while $f(G'){=}f(G){-}(n{-}7){-}3$ if $w$ has degree 2. Hence, $f(G'){<}f(G)$, which contradicts the fact that $G$ minimizes $f$.

So $m_{12}=m_{23}$ which, as shown in the previous case, implies $m_{33}{=}m_{12}{+}m_{13}{-}3$ and $3m_{33}=n{-}7{-}m_{22}{+}m_{13}$. It follows that $f(G){=}(n{-}7){+}m_{13}{-}(n{-}7{-}m_{22}{+}m_{13}){=}m_{22}{\ge} 0$.

\item Let $f(G)=2m_{12}+2m_{13}-m_{33}$. To prove \G{16}, we have to prove that $f(G)\ge-1$.
Let $\cal{G}$ be the set of graphs that minimize $f(G)$ among all graphs of size $m=n+1$ that satisfy condition (\ref{m>12}) and let $G$ be a graph in $\cal{G}$ having the smallest number of vertices of degree 1. Since $m=n+1$, $G$ has a least one vertex of degree 3.

If $G$ has at least one vertex of degree 1, then let $[v_1,v_2,\ldots,v_p]$ ($p\ge 2$) be a chain in $G$ with $v_1$ of degree 1, $v_p$ of degree 3 and all other vertices of degree 2. Let $u\neq v_{p-1}$ be a second neighbor of $v_p$ and let $G'$ be the graph obtained from $G$ by replacing $uv_p$ by $uv_1$. Then $G'$ is still connected, has $m_{12}{+}m_{13}{-}1$ vertices of degree 1 and at least $m_{33}{-}2$ $33$-edges. Hence, $G'$ has less vertices of degree 1 than $G$ and $f(G'){\leq} f(G)$, a contradiction.

So $n_1{=}0$, and it follows from Equation (\ref{n1n3}) that $n_3{=}2$. Therefore, $f(G){=}{-}m_{33}{\geq}{-}1$.\qedhere
\end{itemize}
\end{proof}

\section{Twenty-one realizable points}\label{sec:realizable}

In this section, we give 21 points $(m_{12},m_{13},m_{33})$ and conditions on $n$ and $m$ so that they are realizable for $(n,m)$. These points appear in Table \ref{vi}, where the first column indicates the Id of the point and the last three columns give the 3 components $m_{12}$, $m_{13}$ and $m_{33}$. 

The conditions on $n$ and $m$ for realizability are given in the second column of Table \ref{ConditionsVi}.
As we will see in Section \ref{sec:extreme}, given any two integers $n$ and $m$ that satisfy condition (\ref{m>12}), the set of extreme points of the polytope $\mathcal{P}_{n,m}$ is a subset of these 21 points.
It may happen that a V$i$ is realizable for $(n,m)$ but not an extreme point of $\mathcal{P}_{n, m}$. The analysis in Section \ref{sec:extreme} will show that the conditions on $n$ and $m$ for V$i$ to be an extreme point of $\mathcal{P}_{n, m}$ are those shown in the third column  of Table  \ref{ConditionsVi}. So, for example, \V{12a} is realizable for $(n,m)$ with $m\le n$, while it is an extreme point of $\mathcal{P}_{n,m}$ only if $m=n-1$.

\begin{table}[h!]
	\centering
	\caption{Twenty-one realizable points}\label{vi}~\\[2ex]
	\begin{tabular}{|c|c|c|c|}\hline		
		Id&$m_{12}$&$m_{13}$&$m_{33}$\\\hline	
		\V{1} &$0$&$0$&$0$\\	
		\V{2}&$2$&$0$&$0$\\
		\V{3}&$0$&$0$&$1$ \\
		\V{6}&$0$&$0$&$5m - 6n$ \\
		\V{7a}&$\frac{ 6n -5m - 3((m-2n) \bmod 4)}{4}$&$(m-2n) \bmod 4$&$ 0$\\
		\V{7b}&$\frac{6n-5m + (m-2n) \bmod 4}{4}$&$0$&$(m-2n) \bmod 4$\\
		\V{7c}&$\frac{6n-5m - (2n-m) \bmod 4}{4}$&$0$&$0$\\
		\V{8c}&$0$&$\frac{ 3n-2m - n \bmod 2}{2}$&$\frac{4m - 3n - 3(n \bmod 2)}{2}$\\
		\V{8d}&$1$&$\frac{3n-2m - 3}{2}$&$\frac{4m - 3n - 1}{2}$\\
		\V{9a}&$3m - 3n - 2$&$3m - 3n - 2$&$6m - 6n - 1$\\
		\V{9b}&$3m - 3n - 1$&$0$&$6m - 6n - 1$\\
		\V{9c}&$0$&$3m - 3n - 2$&$6m - 6n - 3$\\
		\V{10a}&$0$&$\frac{6n-5m - m \bmod 3}{3}$&$0$ \\
		\V{10b}&$m \bmod 3$&$\frac{6n-5m - 4(m \bmod 3)}{3}$&$0$ \\
		\V{10c}&$0$&$\frac{6n-5m + (2m) \bmod 3}{3}$&$(2m) \bmod 3$\\
		\V{11a}&$\frac{3n-2m - m \bmod 3}{3}$&$0$&$\frac{7m - 6n - 4(m \bmod 3)}{3}$\\
		\V{11b}&$\frac{3n-2m - 2((2m) \bmod 3)}{3}$&$(2m) \bmod 3$&$\frac{7m - 6n + (2m) \bmod 3}{3}$\\
		\V{11c}&$\frac{ 3n-2m - m \bmod 3}{3}$&$0$&$\frac{7m - 6n - m \bmod 3}{3}$\\
		\V{12a}&$0$&$3n-3m + 1$&$0$\\
		\V{12b}&$0$&$0$&$3m - 3n - 1$\\
		\V{12c}&$1$&$0$&$3m - 3n + 1$\\\hline
	\end{tabular}
\end{table}

\begin{table}[h!]
	\centering
	\caption{Conditions on $n$ and $m$ for the realizability of each V$i$ of Table \ref{vi} and for them to be an extreme point of $\mathcal{P}_{n,m}$ (in addition to condition (\ref{m>12}))}\label{ConditionsVi}~\\[2ex]
	\begin{tabular}{|c|c|c|}\hline		
		Id&Conditions for realizability&Conditions  for being an extreme point of $\mathcal{P}_{n,m}$\\\hline	
		\V{1}&$n \le m \le \lfloor\frac{6n}{5}\rfloor$&$n \le m \le \lfloor\frac{6n}{5}\rfloor$\\
		\V{2}&$m \le \lfloor\frac{6n-8}{5}\rfloor$&$n-1 = m$ or ($m = \lfloor\frac{6n-8}{5}\rfloor$ and $m \bmod 6 \neq 0$)\\
		\V{3}&$n + 1 \le m \le \lfloor\frac{6n+1}{5}\rfloor$&$m = n+1$ or ($m = \lfloor\frac{6n+1}{5}\rfloor$ and $m \bmod 6 = 5$)\\
		\V{6}&$\lfloor\frac{6n+4}{5}\rfloor \le m $&$\lfloor\frac{6n+4}{5}\rfloor \le m$\\
		\V{7a}&$m \le \frac{6n-3((m - 2n) \bmod 4)}{5}$&$m \le \frac{6n-3((m - 2n) \bmod 4)}{5}$\\
		\V{7b}&$m \le \lfloor\frac{6n+3}{5}\rfloor$
		&$m \le \lfloor\frac{6n+3}{5}\rfloor$\\
		\V{7c}&$m \le \lfloor\frac{6n}{5}\rfloor$&$m \le \lfloor\frac{6n}{5}\rfloor$\\
		\V{8c}&no condition&no condition\\
		\V{8d}&$n \bmod 2 = 1$&$n \bmod 2 = 1$\\
		\V{9a}&$n+1 \le m \le \lfloor\frac{21n+13}{20}\rfloor$&$m = n+1$\\
		\V{9b}&$n+1 \le m \le \lfloor\frac{12n+3}{11}\rfloor$&$n{+}1 {=} m$ or $(m {=} \lfloor\frac{12n{+}3}{11}\rfloor$ and $m \bmod 12 \in \{9, 10, 11\})$\\
		\V{9c}&$n+1 \le m \le \lfloor\frac{9n+3}{8}\rfloor$&$n+1 = m$ or ($m = \lfloor\frac{9n+3}{8}\rfloor$ and $m \bmod 9 = 6$)\\
		\V{10a}&$m \le \lfloor\frac{6n}{5}\rfloor$&$m \le \lfloor\frac{6n}{5}\rfloor$\\
		\V{10b}&$m \le \frac{6n-3((m - 2n) \bmod 4)}{5}$&$m \le \frac{6n-3((m - 2n) \bmod 4)}{5}$\\\
		\V{10c}&$m \le \lfloor\frac{6n+2}{5}\rfloor$&$m \le \lfloor\frac{6n+2}{5}\rfloor$\\
		\V{11a}&no condition&no condition\\
		\V{11b}&no condition&no condition\\
		\V{11c}&no condition&no condition\\
		\V{12a}&$m \le n$&$m = n-1$\\
		\V{12b}&$n+2 \le m$&$n+2 \le m$\\
		\V{12c}&$n+2 \le m$&$n+2 \le m$\\\hline
	\end{tabular}
\end{table}

Given $n$, $m$ and a point V$=(a,b,c)$ with three non-negative components, it is shown in \cite{Hansen2017} that a necessary and sufficient condition for V to be realizable for $(n,m)$ is that the following inequalities are valid
\begin{align}
\label{Hansen1}  m_{33} & \leq \tfrac{n_3(n_3-1)}{2} &&\mbox{ if } n_3 = 1, 2 \mbox{ or } 3,\\[-2pt]
\label{Hansen2}  m_{22} & \leq \tfrac{n_2(n_2-1)}{2}  &&\mbox{ if }  n_2 = 1 \mbox{ or }  2,\\[-2pt]
\label{Hansen3}  m_{23} & \leq n_2n_3  &&\mbox{ if }  n_2 = 1 \mbox{ or }  2 \mbox{ and }  n_3 = 1,\\[-2pt]
\label{Hansen4}  m_{23} & \geq \delta(n_2) + \delta(n_3) - 1,\\[-2pt]
\label{Hansen5}  m_{23} + m_{33} & \geq n_3 + \delta(n_2) - 1,\\[-2pt]
\label{Hansen6}  m_{22} + m_{23} & \geq n_2 + \delta(n_3) - 1,\\[-2pt]
\label{Hansen7}  m_{22} + m_{23} + m_{33} & \geq n_2 + n_3 - 1.
\end{align}
where $m_{12}=a$, $m_{13}=b$, $m_{33}=c$, $n_1, n_2, n_3,n,m,m_{22}$ and $m_{23}$ are derived from $m_{12},m_{13},m_{33}$ from equations (\ref{eq_n1})-(\ref{get_x23}) of Section \ref{sec:basic}, and where
$$
\delta(x) = \left\{ 
\begin{array}{ll}
1 & \mbox{if } x \ge 1,\\
0 & \mbox{otherwise.}
\end{array}
\right.
$$
Condition~\eqref{Hansen7} is equivalent to $m - m_{12} - m_{13} \geq n - m_{12} - m_{13} - 1$, which is equivalent to $m \geq n - 1$. Hence, given a point V$i=(m_{12},m_{13},m_{33})$ in Table \ref{vi}, and given $n$ and $m$ that satisfy the conditions for V$i$ in the second column of Table \ref{ConditionsVi}, we can prove that V$i$ is realizable by showing that inequalities (\ref{Hansen1})-(\ref{Hansen6}) are satisfied. Let us illustrate this with \V{1}.

\begin{lem}\label{000}
	Let $n$ and $m$ be integers satisfying $\max\{12,n\}\le m \le \lfloor\frac{6n}{5}\rfloor$. Then, the point
	\emph{\V{1}=(0,0,0)} is realizable for $(n,m)$.
\end{lem}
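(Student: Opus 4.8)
The plan is to apply the realizability criterion of \cite{Hansen2017} recalled just above, exactly as the surrounding text suggests: since condition (\ref{Hansen7}) has already been reduced to $m\ge n-1$ (which is implied by $m\ge\max\{12,n\}\ge n$), it suffices to verify that inequalities (\ref{Hansen1})--(\ref{Hansen6}) hold for the candidate point $(m_{12},m_{13},m_{33})=(0,0,0)$. The first step is to substitute these three zeros into the equations of Section \ref{sec:basic} to pin down every derived quantity. Equation (\ref{eq_n1}) gives $n_1=0$; equations (\ref{get_x22}) and (\ref{get_x23}) give $m_{22}=6n-5m$ and $m_{23}=6(m-n)$; and (\ref{eq_n2})--(\ref{eq_n3}) then give $n_2=3n-2m$ and $n_3=2(m-n)$. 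So the only unknown is whether a connected graph with these degree and edge counts exists, and the criterion answers exactly this.

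The second step is to record the easy sign information and, crucially, to show that the \emph{conditional} inequalities are inactive. From $m\ge n$ we get $m_{23}\ge 0$ and $n_3\ge 0$, and from $m\le\lfloor 6n/5\rfloor$ we get $m_{22}\ge 0$ and $n_2\ge 0$. More importantly, combining $m\ge 12$ with $m\le\tfrac{6n}{5}$ forces $n\ge 10$, whence $n_2=3n-2m\ge 3n-\tfrac{12n}{5}=\tfrac{3n}{5}\ge 6>2$. This is the key observation: inequalities (\ref{Hansen2}) and (\ref{Hansen3}) are imposed only when $n_2\in\{1,2\}$, so they are vacuous here, and inequality (\ref{Hansen1}) is imposed only for small $n_3$ but holds automatically because its left-hand side $m_{33}=0$ never exceeds the non-negative right-hand side.

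It then remains to check the three unconditional inequalities (\ref{Hansen4})--(\ref{Hansen6}), and each reduces to $m\ge n$. The left-hand sides of (\ref{Hansen4}) and (\ref{Hansen5}) are $m_{23}=6(m-n)$ and $m_{23}+m_{33}=6(m-n)$, which are at least as large as their respective right-hand sides (bounded by $1$ and by $n_3=2(m-n)$); and (\ref{Hansen6}) reads $m_{22}+m_{23}=m\ge n_2+\delta(n_3)-1$, i.e. $3(m-n)\ge\delta(n_3)-1$. Once all six inequalities are confirmed, \cite{Hansen2017} guarantees that a graph realizing $(0,0,0)$ exists. I expect the only delicate point to be the threshold bookkeeping of the previous paragraph: one must verify that the hypothesis $m\ge 12$ (part of condition (\ref{m>12})) is precisely what makes $n_2$ large enough to neutralize the conditional constraints (\ref{Hansen1})--(\ref{Hansen3}); everything else is direct substitution. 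As a sanity check, the boundary case $m=n$ yields $n_2=n$, $n_3=0$ and $m_{22}=n$, i.e. the cycle $C_n$, confirming realizability there.
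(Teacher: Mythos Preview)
Your proof is correct and follows essentially the same approach as the paper: substitute $(m_{12},m_{13},m_{33})=(0,0,0)$ into the equations of Section~\ref{sec:basic}, observe that $n_2>2$ makes the conditional inequalities (\ref{Hansen2})--(\ref{Hansen3}) vacuous, and verify (\ref{Hansen1}) and (\ref{Hansen4})--(\ref{Hansen6}) directly. Your bound $n_2\ge 3n/5\ge 6$ is in fact slightly sharper than the paper's $n_2\ge 3$ (which detours through $m\le\frac{3n-3}{2}$), but either suffices.
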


\begin{proof}
	It follows from equations (\ref{eq_n1})-(\ref{get_x23}) that $m_{22} = 6n-5m$, $m_{23} = 6m - 6n$, $n_1 = 0$, $n_2 = 3n - 2m$ and $n_3 = 2m - 2n$. Hence, all $m_{ij}$ and all $n_i$ are non-negative and integers. Observe also that $m\ge 12$ implies $n\ge 10$ which implies $m\leq\frac{6n}{5}\leq \frac{3n-3}{2}$, and we thus have $n_2=3n-2m\ge 3$.
	\begin{itemize}\setlength\itemsep{0pt}
		\item Inequality (\ref{Hansen1}) is  satisfied since $\frac{n_3(n_3-1)}{2}\geq 0=m_{33}$.
		\item There is no need to check inequalities (\ref{Hansen2}) and (\ref{Hansen3}) since $n_2>2$.
		\item $n_3>0$ if and only if $m>n$. Hence, inequality (\ref{Hansen4}) is satisfied since $m_{23}=6(m-n)\geq \delta(n_3)=\delta(n_2)+\delta(n_3)-1$.
		\item Inequality (\ref{Hansen5}) is satisfied since $m_{23}+m_{33}=6(m-n)\geq 2(m-n)=n_3+\delta(n_2)-1$.
		\item Inequality (\ref{Hansen6}) is satisfied since $m_{22}+m_{23}=m\geq n\geq n_2\geq n_2+\delta(n_3)-1$.\qedhere
	\end{itemize}
\end{proof}

As second illustration, we show that  points \V{8c} and \V{8d} are realizable for $(n,m)$ if the conditions of the second column of Table \ref{ConditionsVi} are satisfied. For this purpose, we consider the point V which is defined as follows:
$$\hspace{-0.3cm}\mbox{V}{=}\left(b,
\frac{3n{-}2m{-}3b{+}(b{-}1)(n \bmod 2)}{2}, \frac{4m{-}3n{-}b{+}(b{-}2a{-}1)(n \bmod 2)}{2}\right)$$
where $a,b$ are binary variables such that $a+b=1$.
Note that V is equal to \V{8c} if $a=1$, and  to \V{8d} if $b=1$.
\begin{lem}
	Let $n$ and $m$ be two integers satisfying condition (\ref{m>12}) and let $a,b$ be binary variables such that $a+b=1$, and  $n\bmod 2=1$ if $b=1$. Then, the point
	\emph{V} is realizable for $(n,m)$.
\end{lem}
\begin{proof}
	It follows from equations (\ref{eq_n1})-(\ref{get_x23}) that
	\begin{align*}
	m_{22} & = 0,\\
	m_{23} & = 2a(n \bmod 2) + b,\\
	n_1 & = \frac{3n-2m-b+(b-1)(n\bmod 2)}{2},\\
	n_2 & = b - (b-1)(n \bmod 2),\\
	n_3 & = \frac{2m-n-b+(b-1)(n \bmod 2)}{2}.
	\end{align*}
	Note that all $m_{ij}$ are non-negative. Indeed, 
	\begin{itemize}\setlength\itemsep{0pt}
		\item $m_{12}$, $m_{22}$ and $m_{23}$ are clearly non-negative;
		\item $\max\{12,n-1\}\leq m\leq \frac{3n-3}{2}$ implies $n\geq 9$ and $m_{33}\geq n-7\geq 2$;
		\item $m_{13}\geq 0$ since      
         $m\leq\lfloor\frac{3n-3}{2}\rfloor$ implies $3n-2m\geq 3\ge 3b+(b-1)n\bmod 2$. 
\end{itemize}
		Also, all $m_{ij}$ and all $n_i$ are integers since $n \bmod2{=}1$ when $b{=}1$. Let's now look at inequalities (\ref{Hansen1})-(\ref{Hansen6}): 
	\begin{itemize}\setlength\itemsep{0pt}
	\item Since $m \geq 12$ and $m\geq n-1$, we have $n_3\ge \frac{2m-n-3}{2}\ge\frac{m-4}{2}\geq 4$, meaning that inequalities (\ref{Hansen1}) and (\ref{Hansen3}) do not need to be checked;
	\item inequality (\ref{Hansen2})
	  is  satisfied. Indeed, $m_{22}=0\leq \frac{n_2(n_2-1)}{2}$ for $n_2\in\{1,2\}$;
	 \item inequality (\ref{Hansen6}) is satisfied since 
	$m_{22} + m_{23} - n_2 + 1 = 1 + a(n \bmod 2) \geq 1 \geq \delta(n_3)$;
	\item to show that inequalities (\ref{Hansen4}) and (\ref{Hansen5}) are satisfied, we consider two cases:
	\begin{itemize}[nosep]
		\item if $a=0$, then
		\begin{itemize}[nosep]
		\item $m_{23} + 1 = b + 1 \geq 2 \geq \delta(n_2) + \delta(n_3)$;
		\item $m_{23} + m_{33} - n_3 + 1 = m - n +b + 1 \geq 1 \geq \delta(n_2)$;
		\end{itemize}
		\item if $a = 1$, then $n_2=\delta(n_2)=n\bmod 2$, which implies
		\begin{itemize}
		\item $m_{23} + 1 = 2(n \bmod 2) + 1 \geq \delta(n_2) + \delta(n_3)$;
		\item $m_{23} + m_{33} - n_3 + 1 = m - n + (n \bmod 2) + 1 \geq \delta(n_2)$.\qedhere
		\end{itemize}
	\end{itemize}
\end{itemize}\end{proof}

The two previous lemmas show that 3 of the 21 points of Table \ref{vi} are realizable. A similar proof can easily be obtained for the 18 other points. A simple way to get a proof that a V$i$ is realizable for $(n,m)$ is to prove that it is impossible to satisfy the conditions of the second column of Table \ref{ConditionsVi} for this V$i$ while violating at least one of the inequalities (\ref{Hansen1})-(\ref{Hansen6}). This can be done using a Satisfiability Modulo Theories (SMT) solver like Z3~\cite{z3}. 

Note that another way of proving that a point $(m_{12},m_{13},m_{33})$ is realizable for $(n,m)$ is to exhibit a chemical graph of order $n$, size $m$, and with $m_{12}$ 12-edges, $m_{13}$ 13-edges and $m_{33}$ 33-edges. For example, we could have proved Lemma \ref{000} by observing that the following graph shows that $(0,0,0)$ is realizable for $(n,m)$ when  $\max\{12,n\}\le m \le \lfloor\frac{6n}{5}\rfloor$: consider a cycle with vertices $v_1,\ldots,v_{2n-m}$ and edges $v_iv_{i+1}$ ($1\le i <2n-m$) and $v_1v_{2n-m}$; for $j=1,\ldots, m-n$, add a vertex $w_j$ linked to $v_{2j-1}$ and to $v_{2n-m-2j+1}$. For illustration, such a graph is shown in Figure \ref{figure5} for $(n,m)=(20,23)$. 

We want to mention here that we provide a webpage ChemicHull \cite{ChemicHull} which, given $n$, $m$ and one of the 21 points V$i=(a,b,c)$, exhibits a chemical graph of order $n$, size $m$, with $a$ 12-edges, $b$ 13-edges, and with $c$ 33-edges.

\begin{figure}
\centering\includegraphics[scale=0.8]{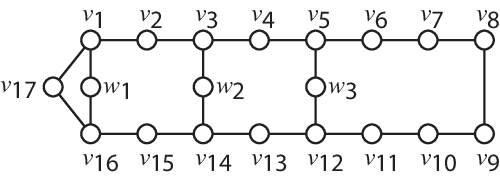}
	\vspace{-0.2cm}\caption{A chemical graph which shows that $(0,0,0)$ is realizable for $(n,m)=(20,23)$}
	\label{figure5}
\end{figure}

\section{The inequalities of Table \ref{facets} are facet defining and induce polytopes}\label{sec:facet}

We first prove that each F$i$ of Table \ref{facets} is facet defining. For this purpose, it is sufficient to give three affinely independent realizable points on the hyperplane defined by F$i$. To prove that three points $p_1,p_2,p_3$ in $\mathbb{R}^3$ are affinely independent, it is sufficient to prove that $p_2-p_1$ is not a multiple of $p_3-p_1$. As we will see, the 21 points of the previous section are sufficient for this purpose.

We give in Table \ref{affine} three points of Table \ref{vi} for every possible case of each F$i$. For example, \G{11} is defined when $m\le \frac{6n-5}{5}$ and $(m\bmod 3)=2$. This is equivalent to say that either $m=\frac{6n-5}{5}$ or $m\le \frac{6n-8}{5}$ and $(m\bmod 3)=2$. We show here how to prove that there are three affinely independent realizable points in each case for \G{11}.
\begin{itemize}\setlength\itemsep{2pt}
    \item if $m=\frac{6n-5}{5}$, then according to Table \ref{ConditionsVi}, \V{7c}=(1,0,0), \V{10a}=(0,1,0) and \V{10c}=(0,2,1) are realizable. Moreover, they belong to the hyperplane defined by \G{11} since $5m-6n+2=-3m_{12}-3m_{13}+3m_{33}=-3$ for these three points. They are affinely independent since \V{10a}-\V{7c}=(-1,1,0) is not a multiple of \V{10c}-\V{7c}=(-1,2,1);
    \item if $m\le \frac{6n-8}{5}$ and $(m\bmod 3)=2$, then according to Table \ref{ConditionsVi}, \V{10a}=$(0,\frac{6n-5m-2}{3},0)$, \V{10b}=$(2,\frac{6n-5m-8}{3},0)$ and \V{10c}=$(0,\frac{6n-5m+1}{3},1)$ are realizable. Moreover, they belong to the hyperplane defined by \G{11} since $-3m_{12}-3m_{13}+3m_{33}=5m-6n+2$ for these three points. They are affinely independent since \V{10b}-\V{10a}=(2,-2,0) is not a multiple of \V{10c}-\V{10a}=(0,1,1).
    \end{itemize}

    It is an easy exercise to show that all triplets of points given in Table \ref{affine} are realizable, belong to the indicated hyperplane and are affinely independent.

\begin{table}
	\centering
	\caption{Three affinely independent points for every inequality F$i$}\label{affine}~\\
\renewcommand{\arraystretch}
{0.9}	\begin{tabular}{|c|lll|c|}\hline
		Id&\multicolumn{3}{c|}{Three points}&Conditions on $n$ and $m$\\\hline
		\multirow{3}*{\G{2}}&\V{6}&\V{8c}&\V{12b}&$\frac{6n+1}{5}\le m$\\
		&\V{1}&\V{8c}&\V{10c}&$n\le m\le\frac{6n}{5}$\\
		&\V{8c}&\V{10a}&\V{12a}&$n-1=m$\\\hline
		\multirow{4}*{\G{3}}&\V{6}&\V{11a}&\V{12b}&$\frac{6n+1}{5}\le m$\\
		&\V{1}&\V{11a}&\V{12b}&$\frac{6n-3}{5}\le m\le\frac{6n}{5}$\\
		&\V{1}&\V{7c}&\V{11a}&$n\le m\le\frac{6n-4}{5}$\\
		&\V{2}&\V{7c}&\V{11c}&$n-1=m$\\\hline
		\multirow{2}*{\G{5}}&\V{1}&\V{7c}&\V{10a}&$n\le m \le \frac{6n-4}{5}$\\
		&\V{2}&\V{7c}&\V{10b}&$n-1=m$\\\hline				
		\G{9}&\V{11a}&\V{11b}&\V{11c}&$(m \bmod 3)=2$\\\hline
		\G{8}&\V{11a}&\V{11b}&\V{11c}&$(m \bmod 3)=1$\\\hline
		\multirow{2}*{\G{1}}&\V{6}&\V{8c}&\V{11a}&$\frac{6n+3}{5}\le m$\\		&\V{8c}&\V{10c}&\V{11a}&$m\le\frac{6n+2}{5}$\\\hline
		\multirow{2}*{\G{11}}&
    \V{7c}&\V{10a}&\V{10c}&$ m=\frac{6n-5}{5},$\\
		&
\V{10a}&\V{10b}&\V{10c}&$(m\le\frac{6n-8}{5})\land ((m \bmod 3)=2)$\\\hline
		\multirow{2}*{\G{14}}&
\V{7b}&\V{10a}&\V{10c}&$m=\frac{6n-2}{5}$\\
&\V{7a}&\V{7b}&\V{7c}&$ (m\le\frac{6n-6}{5})\land((m-2n) \bmod 4=2)$\\\hline		\G{13}&\V{7a}&\V{7b}&\V{7c}&$(m\le \frac{6n-3}{5})\land ((m-2n) \bmod 4=1)$\\\hline
		\multirow{2}*{\G{12}}		&\V{7b}&\V{7c}&\V{10c}&$m\in\{\frac{6n-5}{5},\frac{6n-1}{5}\}$\\
		&\V{7a}&\V{7b}&\V{7c}&$ (m\le\frac{6n-9}{5})\land ((m-2n) \bmod 4=3)$\\
		\hline
		\G{10}&\V{10a}&\V{10b}&\V{10c}&$(m\le \frac{6n-4}{5})\land ((m \bmod 3)=1)$\\\hline
		\multirow{8}*{\G{4}}&\V{11b}&\V{11c}&\V{12c}&$(n+2\le m \le\frac{3n-6}{2})\land ((m \bmod 3) \neq 0)$\\
		&\V{8c}&\V{11a}&\V{12c}&$(n+2\le m \le\frac{3n-6}{2})\land ((m \bmod 3) = 0)\land ((n\bmod 2)=0)$\\
		&\V{8d}&\V{11a}&\V{12c}&$(n+2\le m \le\frac{3n-7}{2})\land ((m \bmod 3) = 0)\land ((n\bmod 2)=1)$\\
		&\V{8c}&\V{9a}&\V{11b}&$(n+1=m)\land ((n\bmod 2)=0)$\\
		&\V{8d}&\V{9a}&\V{11b}&$(n+1=m)\land ((n\bmod 2)=1)$\\
		&\V{1}&\V{11b}&\V{11c}&$(n=m)\land ((n\bmod 3) \neq 0)$\\
		&\V{1}&\V{8c}&\V{11a}&$(n=m)\land ((n\bmod 6) = 0)$\\
		&\V{1}&\V{8d}&\V{11a}&$(n=m)\land ((n\bmod 6) = 3)$\\\hline
		\multirow{3}*{\G{6}}&\V{8c}&\V{8d}&\V{12b}&$(n+2\le m\le\frac{3n-3}{2})\land ((n\bmod 2)=1)$\\
		&\V{8c}&\V{8d}&\V{9c}&$(n+1=m)\land ((n\bmod 2)=1)$\\
		&\V{2}&\V{8c}&\V{8d}&$(n-1=m)\land ((n\bmod 2)=1)$\\\hline
		\G{18}&\V{1}&\V{8c}&\V{8d}&$(n=m)\land ((n\bmod 2) =1)$\\\hline
				\G{7}&\V{8c}&\V{9a}&\V{9c}&$(n+1=m)\land((n \bmod 2)=0)$\\\hline
		\G{7bis}&\V{8c}&\V{12b}&\V{12c}&$(n+2\le m)\land((n \bmod 2)=0)$\\\hline
		\G{17}&\V{2}&\V{8c}&\V{12a}&$(n-1=m)\land ((n\bmod 2) =0)$\\\hline
		\G{19}&\V{2}&\V{11b}&\V{11c}&$(n-1=m)\land ((n\bmod 3) =0)$\\\hline
		\G{20}&\V{2}&\V{11b}&\V{11c}&$(n-1=m)\land ((n\bmod 3) =2)$\\\hline
        		\multirow{2}*{\G{15}}&\V{2}&\V{8d}&\V{11b}&$ (n-1=m)\land((n \bmod 2)=1)$\\
		&\V{2}&\V{8c}&\V{11b}&$(n-1=m)\land((n \bmod 2)=0)$\\\hline
        \G{16}&\V{9a}&\V{9b}&\V{9c}&$n+1=m$\\\hline

	\end{tabular}
\end{table}

In what follows, given $n$ and $m$, we say that a facet F$i$ is \emph{active} for $(n,m)$ if the conditions associated with F$i$ in Table \ref{facets} are satisfied. Let $\mathcal{P}'_{n,m}$ be the polyhedron defined by the active facets F$i$ for $(n,m)$. We now prove that $\mathcal{P}'_{n,m}$ is a polytope since it is bounded for all pairs $(n,m)$ satisfying condition (\ref{m>12}).

\begin{thm}
	$\mathcal{P}'_{n, m}$ is bounded for all $(n,m)$ with $\max\{12,n-1\} \le  m \le \lfloor \frac{3n-3}{2} \rfloor$.
\end{thm}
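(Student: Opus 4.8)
The plan is to prove boundedness via the recession (characteristic) cone. Since $\mathcal{P}'_{n,m}\supseteq\mathcal{P}_{n,m}\neq\emptyset$ (the facets of Table~\ref{facets} are valid by the previous theorem, and realizable points exist), $\mathcal{P}'_{n,m}$ is a nonempty polyhedron, and such a polyhedron is bounded exactly when its recession cone is $\{0\}$. Writing each active facet in the form $r\le\ell(m_{12},m_{13},m_{33})$ as in Table~\ref{facets}, the recession cone is $\{(d_1,d_2,d_3):\ell(d_1,d_2,d_3)\ge 0\text{ for every active }\text{F}i\}$, where $\ell$ now denotes the linear part. So it suffices to show that the only direction $d=(d_1,d_2,d_3)$ satisfying all active homogeneous inequalities is $d=0$.

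First I would use the three facets active for every $(n,m)$, namely \G{2}, \G{3} and \G{1}. They give $d_1\ge 0$, $d_2\ge 0$ and $-4d_1-3d_2+d_3\ge 0$, hence $d_3\ge 4d_1+3d_2\ge 0$. Thus every recession direction lies in $C=\{d:d_1,d_2\ge 0,\ d_3\ge 4d_1+3d_2\}$, and it only remains, for each $(n,m)$, to exhibit one further active facet whose homogeneous inequality is incompatible with $C$ unless $d=0$.

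The finishing facet is chosen by the parity of $n$ and the position of $m$. When $n$ is odd and $m\neq n$, facet \G{6} gives $d_3\le 2d_1+d_2$, which with $C$ forces $2d_1+2d_2\le 0$, hence $d=0$; when $n$ is odd and $m=n$, facet \G{18} gives $(n-1)d_3\le 2(n-2)d_1+(n-3)d_2$, which combined with $C$ yields $2n\,d_1+2n\,d_2\le 0$, again $d=0$. When $n$ is even and $n\le m\le\frac{3n-6}{2}$, facet \G{4} gives $d_3\le d_1+d_2$, so $3d_1+2d_2\le 0$ and $d=0$. When $n$ is even and $m\ge n+2$, I would first rewrite \G{7bis} in the variables $(m_{12},m_{13},m_{33})$ via $m_{23}=6m-6n+3m_{12}+2m_{13}-2m_{33}$, turning its homogeneous part into $(3n-2m)(4d_1+2d_2-2d_3)+4d_2\ge 0$; setting $k=3n-2m\ge 3$ this reads $d_3\le 2d_1+(1+\tfrac{2}{k})d_2$, so $C$ gives $2d_1+(2-\tfrac{2}{k})d_2\le 0$ with $2-\tfrac{2}{k}>0$, hence $d=0$. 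Finally, when $m=n-1$ (forcing $n\ge 13$), facet \G{15} gives $(n-4)d_3\le(n-7)d_1+(n-6)d_2$, which with $C$ yields $(3n-9)d_1+(2n-6)d_2\le 0$ and then $d=0$.

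The main obstacle is bookkeeping rather than algebra: I must check that these activation conditions cover the entire region~(\ref{m>12}), together with the correct conversion of \G{7bis} through $m_{23}$. For $n$ odd this is immediate, as \G{6} handles every $m\neq n$ and \G{18} the case $m=n$. For $n$ even one verifies that the ranges of \G{15}, \G{4} and \G{7bis} overlap to cover $[\max\{12,n-1\},\lfloor\frac{3n-4}{2}\rfloor]$: the relevant inequalities $\frac{3n-6}{2}\ge n+2$ and $\frac{3n-4}{2}\ge n+2$ both hold for the even values $n\ge 10$ occurring here, so \G{4} covers $n\le m\le\frac{3n-6}{2}$, \G{7bis} covers everything with $m\ge n+2$ (in particular the top value $m=\frac{3n-4}{2}$), and \G{15} covers $m=n-1$. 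In every case the recession cone reduces to $\{0\}$, so $\mathcal{P}'_{n,m}$ is bounded; the individual cone computations are all one-line eliminations, and the only care needed is the exhaustiveness of this split.
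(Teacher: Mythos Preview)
Your argument is correct and essentially mirrors the paper's: both combine the always-active facets \G{2}, \G{3}, \G{1} with one further ``closing'' facet to force boundedness, and your recession-cone computation is just the homogeneous version of the paper's direct elimination of $m_{12},m_{13},m_{33}$. The only cosmetic differences are that you organize the case split by the parity of $n$ and invoke \G{18} when $n$ is odd with $m=n$ (the paper simply uses \G{4} there, which is also active since $n\ge 12$), whereas the paper splits by the range of $m$ and uses only \G{4}, \G{6}, \G{7bis}, \G{15} as the closing facet.
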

\begin{proof}
    Note first that \G{2}, \G{3} and \G{1} are active for all pairs $(n,m)$ satisfying condition (\ref{m>12}). Hence, $m_{12}\ge 0$,   $m_{13}\ge 0$, and  $m_{33}\ge 5m-6n$. It remains to prove that $m_{12}$, $m_{13}$ and $m_{33}$ are upper bounded. Consider \G{4}, \G{6}, \G{7bis} and \G{15}. At least one of these four facets is active for $(n,m)$. Indeed:
	\begin{itemize}\setlength\itemsep{0pt}
		\item if $n-1=m$ then \G{15} is active;
		\item if $n\le m\le \frac{3n-6}{2} $, then \G{4} is active;
		\item if $m\ge \frac{3n-5}{2}$ and $n$ is odd then \G{6} is active;
		\item if $m\ge\frac{3n-5}{2}$ and $n$ is even then \G{7bis} is active.
	\end{itemize}
    \newpage We can rewrite each of the above four inequalities as follows:
	\begin{equation}\label{borne}
	p_{12}(n,m)m_{12}+p_{13}(n,m)m_{13}-p_{33}(n,m)m_{33}\geq b(n,m)
	\end{equation}
	where $p_{12}(n,m)$, $p_{13}(n,m)$, $p_{33}(n,m)$ and $b(n,m)$
	are polynomials in $n$ and $m$. We now prove that 
    if the pair $(n,m)$ satisfies condition (\ref{m>12}), then 
    $-4p_{33}(n,m)+p_{12}(n,m)<0$ and $-3p_{33}(n,m)+p_{13}(n,m)<0$:
	\begin{itemize}\setlength\itemsep{1pt}
		\item  for \G{4}, we have $p_{12}(n,m)=p_{13}(n,m)=p_{33}(n,m)=1$, which gives $-4p_{33}(n,m)+p_{12}(n,m)=-3$ and $-3p_{33}(n,m)+p_{13}(n,m)=-2$;
		\item  for \G{6} we have $p_{12}(n,m)=2$, $p_{13}(n,m)=p_{33}(n,m)=1$, which gives $-4p_{33}(n,m)+p_{12}(n,m)=-3p_{33}(n,m)+p_{13}(n,m)=-2$;
		\item  for \G{7bis}, it follows from (\ref{7bis}) that $p_{12}(n,m)=2(3n-2m)$, $p_{13}(n,m)=3n-2m+2$ and $p_{33}(n,m)=3n-2m$, which gives $-4p_{33}(n,m)+p_{12}(n,m)=2(2m-3n)\leq -6$ and $-3p_{33}(n,m)+p_{13}(n,m)=2(2m-3n+1)\leq -4$;
		\item  for \G{15} we have $p_{12}(n,m)=n-7$, $p_{13}(n,m)=n-6$ and $p_{33}(n,m)=n-4$, which gives $-4p_{33}(n,m)+p_{12}(n,m)=-3(n-3)<0$ and $-3p_{33}(n,m)+p_{13}(n,m)=-2(n-3)<0$.
	\end{itemize}
	Hence, by adding $p_{33}(n,m)$ times inequality \G{1} to (\ref{borne}) we get
	$$
	\big(-4p_{33}(n,m)+p_{12}(n,m)\big)m_{12}+\big(-3p_{33}(n,m)+p_{13}(n,m)\big)m_{13}\geq p_{33}(n,m)(5m-6n)+b(n,m),$$
	which implies that $m_{12}$ and $m_{13}$ are upper bounded by a polynomial that depends on $n$ and $m$ (since \G{2} and \G{3} forbid negative values for these two variables).
	But (\ref{borne}) is equivalent to
	$$p_{33}(n,m)m_{33}\leq p_{12}(n,m)m_{12}+p_{13}(n,m)m_{13}-b(n,m),$$
	which implies that $m_{33}$ is also upper bounded by a polyomial that depends on $n$ and $m$.
\end{proof}

\section{Extreme points of $\mathcal{P}_{n,m}$}\label{sec:extreme}
As proved in Sections \ref{sec:valid} and \ref{sec:facet}, the 21 inequalities of Table \ref{facets} are valid and facet defining. They are defined with constraints on $n$ and $m$. Hence, depending on $n$ and $m$, not all of facets are active for $(n,m)$. The set of integer pairs $(n,m)$ that satisfy condition (\ref{m>12}) can be partitioned into 75 subsets, each one having a different set of active facets for $(n,m)$. Hence, there are 75 polytope types, each one having its subset of  active facets. These polytope types are listed in Table \ref{polytopes}. We indicate in the first column the Id of each polytope type, we then give the set of active facets, and we finally indicate the constraints on $n$ and $m$. 

Note that we do not indicate redundant constraints on $n$ and $m$. For example, if $\frac{6n-3}{5}\le m<\frac{6n}{5}$ and $m\bmod 3=0$, then $m=\frac{6n-3}{5}$ and, whatever the parity of $n$, we necessarily have $1=(-3)\bmod 4= (5m-6n)\bmod 4=(m-2n)\bmod 4$. This explains the existence of facet \G{13} for polytope types P61 and P64, even if it is not explicitly mentioned that $(m-2n)\bmod 4=1$.
\newline

{\small\renewcommand*{\arraystretch}{0.9}\begin{longtable}{|l|l|ccc|}
\caption{Polytopes  with their facets and their conditions on $n$ and $m$ (in addition to condition (\ref{m>12}))}\label{polytopes}
\endfirsthead

\multicolumn{5}{c}{(Table \ref{polytopes} continued)} \\

\endhead

\endfoot

\endlastfoot

\multicolumn{5}{c}{$m=n -1$}\\\cline{1-3}
\multicolumn{1}{|c}{Id}&\multicolumn{1}{c}{facets}&\multicolumn{1}{c|}{$n\bmod 12$}&\multicolumn{2}{c}{} \\ \cline{1-3}
\multicolumn{1}{|c|}{P1}&\G{2}, \G{3}, \G{5}, \G{9}, \G{1}, \G{11}, \G{12}, \G{17}, \G{19}, \G{15} &\multicolumn{1}{c|}{0}&\multicolumn{2}{c}{} \\
\multicolumn{1}{|c|}{P2}&\G{2}, \G{3}, \G{5}, \G{1}, \G{14}, \G{6}, \G{15} &\multicolumn{1}{c|}{1}&\multicolumn{2}{c}{} \\
\multicolumn{1}{|c|}{P3}&\G{2}, \G{3}, \G{5}, \G{8}, \G{1}, \G{13}, \G{10}, \G{17}, \G{20}, \G{15} &\multicolumn{1}{c|}{2}&\multicolumn{2}{c}{} \\
\multicolumn{1}{|c|}{P4}&\G{2}, \G{3}, \G{5}, \G{9}, \G{1}, \G{11}, \G{6}, \G{19}, \G{15} &\multicolumn{1}{c|}{3}&\multicolumn{2}{c}{} \\
\multicolumn{1}{|c|}{P5}&\G{2}, \G{3}, \G{5}, \G{1}, \G{12}, \G{17}, \G{15} &\multicolumn{1}{c|}{4}&\multicolumn{2}{c}{} \\
\multicolumn{1}{|c|}{P6}&\G{2}, \G{3}, \G{5}, \G{8}, \G{1}, \G{14}, \G{10}, \G{6}, \G{20}, \G{15} &\multicolumn{1}{c|}{5}&\multicolumn{2}{c}{} \\
\multicolumn{1}{|c|}{P7}&\G{2}, \G{3}, \G{5}, \G{9}, \G{1}, \G{11}, \G{13}, \G{17}, \G{19}, \G{15} &\multicolumn{1}{c|}{6}&\multicolumn{2}{c}{} \\
\multicolumn{1}{|c|}{P8}&\G{2}, \G{3}, \G{5}, \G{1}, \G{6}, \G{15} &\multicolumn{1}{c|}{7}&\multicolumn{2}{c}{} \\
\multicolumn{1}{|c|}{P9}&\G{2}, \G{3}, \G{5}, \G{8}, \G{1}, \G{12}, \G{10}, \G{17}, \G{20}, \G{15} &\multicolumn{1}{c|}{8}&\multicolumn{2}{c}{} \\
\multicolumn{1}{|c|}{P10}&\G{2}, \G{3}, \G{5}, \G{9}, \G{1}, \G{11}, \G{14}, \G{6}, \G{19}, \G{15} &\multicolumn{1}{c|}{9}&\multicolumn{2}{c}{} \\
\multicolumn{1}{|c|}{P11}&\G{2}, \G{3}, \G{5}, \G{1}, \G{13}, \G{17}, \G{15} &\multicolumn{1}{c|}{10}&\multicolumn{2}{c}{}\\
\multicolumn{1}{|c|}{P12}&\G{2}, \G{3}, \G{5}, \G{8}, \G{1}, \G{10}, \G{6}, \G{20}, \G{15} &\multicolumn{1}{c|}{11}&\multicolumn{2}{c}{} \\
\cline{1-3}
\multicolumn{5}{c}{\vspace{-2pt}}\\
\multicolumn{5}{c}{$m=n$}\\\cline{1-3}
\multicolumn{1}{|c}{Id}&\multicolumn{1}{c}{facets}&\multicolumn{1}{c|}{$n\bmod 12$}&\multicolumn{2}{c}{}\\ \cline{1-3}
\multicolumn{1}{|c|}{P13}&\G{2}, \G{3}, \G{5}, \G{1}, \G{4} &\multicolumn{1}{c|}{0}&\multicolumn{2}{c}{}  \\
\multicolumn{1}{|c|}{P14}&\G{2}, \G{3}, \G{5}, \G{8}, \G{1}, \G{12}, \G{10}, \G{4}, \G{18} &\multicolumn{1}{c|}{1}&\multicolumn{2}{c}{} \\
\multicolumn{1}{|c|}{P15}&\G{2}, \G{3}, \G{5}, \G{9}, \G{1}, \G{11}, \G{14}, \G{4} &\multicolumn{1}{c|}{2}&\multicolumn{2}{c}{}\\
\multicolumn{1}{|c|}{P16}&\G{2}, \G{3}, \G{5}, \G{1}, \G{13}, \G{4}, \G{18}&\multicolumn{1}{c|}{3}&\multicolumn{2}{c}{} \\
\multicolumn{1}{|c|}{P17}&\G{2}, \G{3}, \G{5}, \G{8}, \G{1}, \G{10}, \G{4}&\multicolumn{1}{c|}{4}&\multicolumn{2}{c}{} \\
\multicolumn{1}{|c|}{P18}&\G{2}, \G{3}, \G{5}, \G{9}, \G{1}, \G{11}, \G{12}, \G{4}, \G{18}&\multicolumn{1}{c|}{5}&\multicolumn{2}{c}{} \\
\multicolumn{1}{|c|}{P19}&\G{2}, \G{3}, \G{5}, \G{1}, \G{14}, \G{4}&\multicolumn{1}{c|}{6}&\multicolumn{2}{c}{}  \\
\multicolumn{1}{|c|}{P20}&\G{2}, \G{3}, \G{5}, \G{8}, \G{1}, \G{13}, \G{10}, \G{4}, \G{18}&\multicolumn{1}{c|}{7}&\multicolumn{2}{c}{} \\
\multicolumn{1}{|c|}{P21}&\G{2}, \G{3}, \G{5}, \G{9}, \G{1}, \G{11}, \G{4}&\multicolumn{1}{c|}{8}&\multicolumn{2}{c}{} \\
\multicolumn{1}{|c|}{P22}&\G{2}, \G{3}, \G{5}, \G{1}, \G{12}, \G{4}, \G{18}&\multicolumn{1}{c|}{9}&\multicolumn{2}{c}{} \\
\multicolumn{1}{|c|}{P23}&\G{2}, \G{3}, \G{5}, \G{8}, \G{1}, \G{14}, \G{10}, \G{4}&\multicolumn{1}{c|}{10}&\multicolumn{2}{c}{} \\
\multicolumn{1}{|c|}{P24}&\G{2}, \G{3}, \G{5}, \G{9}, \G{1}, \G{11}, \G{13}, \G{4}, \G{18}&\multicolumn{1}{c|}{11}&\multicolumn{2}{c}{} \\
\cline{1-3}
\multicolumn{5}{c}{\vspace{-2pt}}\\
\multicolumn{5}{c}
 {$m=n+1$}\\\cline{1-3}
\multicolumn{1}{|c}{Id}&\multicolumn{1}{c}{facets}&\multicolumn{1}{c|}{$n\bmod 12$}&\multicolumn{2}{c}{}\\ \cline{1-3}
\multicolumn{1}{|c|}{P25}&\G{2}, \G{3}, \G{5}, \G{8}, \G{1}, \G{13}, \G{10}, \G{4}, \G{7}, \G{16} &\multicolumn{1}{c|}{0}&\multicolumn{2}{c}{}\\
\multicolumn{1}{|c|}{P26}&\G{2}, \G{3}, \G{5}, \G{9}, \G{1}, \G{11}, \G{4}, \G{6}, \G{16} &\multicolumn{1}{c|}{1}&\multicolumn{2}{c}{} \\
\multicolumn{1}{|c|}{P27}&\G{2}, \G{3}, \G{5}, \G{1}, \G{12}, \G{4}, \G{7}, \G{16} &\multicolumn{1}{c|}{2}&\multicolumn{2}{c}{} \\
\multicolumn{1}{|c|}{P28}&\G{2}, \G{3}, \G{5}, \G{8}, \G{1}, \G{14}, \G{10}, \G{4}, \G{6}, \G{16} &\multicolumn{1}{c|}{3}&\multicolumn{2}{c}{} \\
\multicolumn{1}{|c|}{P29}&\G{2}, \G{3}, \G{5}, \G{9}, \G{1}, \G{11}, \G{13}, \G{4}, \G{7}, \G{16} &\multicolumn{1}{c|}{4}&\multicolumn{2}{c}{} \\
\multicolumn{1}{|c|}{P30}&\G{2}, \G{3}, \G{5}, \G{1}, \G{4}, \G{6}, \G{16} &\multicolumn{1}{c|}{5}&\multicolumn{2}{c}{} \\
\multicolumn{1}{|c|}{P31}&\G{2}, \G{3}, \G{5}, \G{8}, \G{1}, \G{12}, \G{10}, \G{4}, \G{7}, \G{16} &\multicolumn{1}{c|}{6}&\multicolumn{2}{c}{} \\
\multicolumn{1}{|c|}{P32}&\G{2}, \G{3}, \G{5}, \G{9}, \G{1}, \G{11}, \G{14}, \G{4}, \G{6}, \G{16} &\multicolumn{1}{c|}{7}&\multicolumn{2}{c}{} \\
\multicolumn{1}{|c|}{P33}&\G{2}, \G{3}, \G{5}, \G{1}, \G{13}, \G{4}, \G{7}, \G{16} &\multicolumn{1}{c|}{8}&\multicolumn{2}{c}{} \\
\multicolumn{1}{|c|}{P34}&\G{2}, \G{3}, \G{5}, \G{8}, \G{1}, \G{10}, \G{4}, \G{6}, \G{16} &\multicolumn{1}{c|}{9}&\multicolumn{2}{c}{} \\
\multicolumn{1}{|c|}{P35}&\G{2}, \G{3}, \G{5}, \G{9}, \G{1}, \G{11}, \G{12}, \G{4}, \G{7}, \G{16} &\multicolumn{1}{c|}{10}&\multicolumn{2}{c}{} \\
\multicolumn{1}{|c|}{P36}&\G{2}, \G{3}, \G{5}, \G{1}, \G{14}, \G{4}, \G{6}, \G{16} &\multicolumn{1}{c|}{11}&\multicolumn{2}{c}{} \\
\cline{1-3}
\newpage\multicolumn{5}{c}{\vspace{10pt}}\\
\multicolumn{5}{c}{$n + 1<m<\frac{6n-3}{5}$}\\\hline
\multicolumn{1}{|c}{Id}&\multicolumn{1}{c}{facets}&$n\bmod 2$&$m\bmod 3$&$(m{-}2n){\bmod} 4$ \\ \hline
\multicolumn{1}{|c|}{P37}&\G{2}, \G{3}, \G{5}, \G{1}, \G{4}, \G{7bis}&0&0&0  \\
\multicolumn{1}{|c|}{P38}&\G{2}, \G{3}, \G{5}, \G{1}, \G{13}, \G{4}, \G{7bis}&0&0&1 \\
\multicolumn{1}{|c|}{P39}&\G{2}, \G{3}, \G{5}, \G{1}, \G{14}, \G{4}, \G{7bis} &0&0&2 \\
\multicolumn{1}{|c|}{P40}&\G{2}, \G{3}, \G{5}, \G{1}, \G{12}, \G{4}, \G{7bis}&0&0&3  \\
\multicolumn{1}{|c|}{P41}&\G{2}, \G{3}, \G{5}, \G{8}, \G{1}, \G{10}, \G{4}, \G{7bis} &0&1&0 \\
\multicolumn{1}{|c|}{P42}&\G{2}, \G{3}, \G{5}, \G{8}, \G{1}, \G{13}, \G{10}, \G{4}, \G{7bis} &0&1&1 \\
\multicolumn{1}{|c|}{P43}&\G{2}, \G{3}, \G{5}, \G{8}, \G{1}, \G{14}, \G{10}, \G{4}, \G{7bis} &0&1&2 \\
\multicolumn{1}{|c|}{P44}&\G{2}, \G{3}, \G{5}, \G{8}, \G{1}, \G{12}, \G{10}, \G{4}, \G{7bis}&0&1&3\\
\multicolumn{1}{|c|}{P45}&\G{2}, \G{3}, \G{5}, \G{9}, \G{1}, \G{11}, \G{4}, \G{7bis}&0&2&0  \\
\multicolumn{1}{|c|}{P46}&\G{2}, \G{3}, \G{5}, \G{9}, \G{1}, \G{11}, \G{13}, \G{4}, \G{7bis}&0&2&1  \\
\multicolumn{1}{|c|}{P47}&\G{2}, \G{3}, \G{5}, \G{9}, \G{1}, \G{11}, \G{14}, \G{4}, \G{7bis}&0&2&2  \\
\multicolumn{1}{|c|}{P48}&\G{2}, \G{3}, \G{5}, \G{9}, \G{1}, \G{11}, \G{12}, \G{4}, \G{7bis}&0&2&3  \\
\multicolumn{1}{|c|}{P49}&\G{2}, \G{3}, \G{5}, \G{1}, \G{4}, \G{6}&1&0&0  \\
\multicolumn{1}{|c|}{P50}&\G{2}, \G{3}, \G{5}, \G{1}, \G{13}, \G{4}, \G{6} &1&0&1\\
\multicolumn{1}{|c|}{P51}&\G{2}, \G{3}, \G{5}, \G{1}, \G{14}, \G{4}, \G{6}&1&0&2  \\
\multicolumn{1}{|c|}{P52}&\G{2}, \G{3}, \G{5}, \G{1}, \G{12}, \G{4}, \G{6} &1&0&3 \\
\multicolumn{1}{|c|}{P53}&\G{2}, \G{3}, \G{5}, \G{8}, \G{1}, \G{10}, \G{4}, \G{6}&1&1&0  \\
\multicolumn{1}{|c|}{P54}&\G{2}, \G{3}, \G{5}, \G{8}, \G{1}, \G{13}, \G{10}, \G{4}, \G{6} &1&1&1 \\
\multicolumn{1}{|c|}{P55}&\G{2}, \G{3}, \G{5}, \G{8}, \G{1}, \G{14}, \G{10}, \G{4}, \G{6}&1&1&2\\
\multicolumn{1}{|c|}{P56}&\G{2}, \G{3}, \G{5}, \G{8}, \G{1}, \G{12}, \G{10}, \G{4}, \G{6} &1&1&3 \\   
\multicolumn{1}{|c|}{P57}&\G{2}, \G{3}, \G{5}, \G{9}, \G{1}, \G{11}, \G{4}, \G{6} &1&2&0 \\
\multicolumn{1}{|c|}{P58}&\G{2}, \G{3}, \G{5}, \G{9}, \G{1}, \G{11}, \G{13}, \G{4}, \G{6}&1&2&1  \\
\multicolumn{1}{|c|}{P59}&\G{2}, \G{3}, \G{5}, \G{9}, \G{1}, \G{11}, \G{14}, \G{4}, \G{6} &1&2&2 \\
\multicolumn{1}{|c|}{P60}&\G{2}, \G{3}, \G{5}, \G{9}, \G{1}, \G{11}, \G{12}, \G{4}, \G{6}&1&2&3  \\
      \hline
\multicolumn{5}{c}{}\\
\multicolumn{5}{c}{$\frac{6n-3}{5} \le m < \frac{6n}{5}$}\\\cline{1-4}
\multicolumn{1}{|c}{Id}&\multicolumn{1}{c}{facets}&$n\bmod 2$&\multicolumn{1}{c|}{$m\bmod 3$}&\multicolumn{1}{c}{} \\ \cline{1-4}
\multicolumn{1}{|c|}{P61}&\G{2}, \G{3}, \G{1}, \G{13}, \G{4}, \G{7bis} &0&\multicolumn{1}{c|}{0}&\multicolumn{1}{c}{}\\
\multicolumn{1}{|c|}{P62}&\G{2}, \G{3}, \G{8}, \G{1}, \G{12}, \G{4}, \G{7bis}&0&\multicolumn{1}{c|}{1}&\multicolumn{1}{c}{}  \\
\multicolumn{1}{|c|}{P63}&\G{2}, \G{3}, \G{9}, \G{1}, \G{14}, \G{4}, \G{7bis}&0&\multicolumn{1}{c|}{2}&\multicolumn{1}{c}{}  \\
\multicolumn{1}{|c|}{P64}&\G{2}, \G{3}, \G{1}, \G{13}, \G{4}, \G{6}&1&\multicolumn{1}{c|}{0}&\multicolumn{1}{c}{}  \\
\multicolumn{1}{|c|}{P65}&\G{2}, \G{3}, \G{8}, \G{1}, \G{12}, \G{4}, \G{6} &1&\multicolumn{1}{c|}{1}&\multicolumn{1}{c}{} \\
\multicolumn{1}{|c|}{P66}&\G{2}, \G{3}, \G{9}, \G{1}, \G{14}, \G{4}, \G{6} &1&\multicolumn{1}{c|}{2}&\multicolumn{1}{c}{} \\
\cline{1-4}
\multicolumn{5}{c}{}\\
\multicolumn{5}{c}{$\frac{6n}{5} \le m < \lfloor\frac{3n}{2}\rfloor-2$}\\\cline{1-4}
\multicolumn{1}{|c}{Id}&\multicolumn{1}{c}{facets}&$n\bmod 2$&\multicolumn{1}{c|}{$m\bmod 3$}&\multicolumn{1}{c}{} \\ \cline{1-4}
\multicolumn{1}{|c|}{P67}&\G{2}, \G{3}, \G{1}, \G{4}, \G{7bis}&0&\multicolumn{1}{c|}{0}&\multicolumn{1}{c}{}  \\
\multicolumn{1}{|c|}{P68}&\G{2}, \G{3}, \G{8}, \G{1}, \G{4}, \G{7bis} &0&\multicolumn{1}{c|}{1}&\multicolumn{1}{c}{} \\
\multicolumn{1}{|c|}{P69}&\G{2}, \G{3}, \G{9}, \G{1}, \G{4}, \G{7bis}&0&\multicolumn{1}{c|}{2}&\multicolumn{1}{c}{}  \\
\multicolumn{1}{|c|}{P70}&\G{2}, \G{3}, \G{1}, \G{4}, \G{6}&1&\multicolumn{1}{c|}{0}&\multicolumn{1}{c}{}  \\
\multicolumn{1}{|c|}{P71}&\G{2}, \G{3}, \G{8}, \G{1}, \G{4}, \G{6}&1&\multicolumn{1}{c|}{1}&\multicolumn{1}{c}{}  \\
\multicolumn{1}{|c|}{P72}&\G{2}, \G{3}, \G{9}, \G{1}, \G{4}, \G{6}&1&\multicolumn{1}{c|}{2}&\multicolumn{1}{c}{}  \\
\cline{1-4}
\multicolumn{5}{c}{}\\
\multicolumn{5}{c}{$\lfloor\frac{3n}{2}\rfloor-2 \le m$}\\\cline{1-3}
\multicolumn{1}{|c}{Id}&\multicolumn{1}{c}{facets}&\multicolumn{1}{c|}{$m\bmod 3$}&\multicolumn{2}{c}{} \\\cline{1-3}
\multicolumn{1}{|c|}{P73}&\G{2}, \G{3}, \G{1}, \G{6} &\multicolumn{1}{c|}{0}&\multicolumn{2}{c}{}\\
\multicolumn{1}{|c|}{P74}&\G{2}, \G{3}, \G{8}, \G{1}, \G{7bis} &\multicolumn{1}{c|}{1}&\multicolumn{2}{c}{}\\
\multicolumn{1}{|c|}{P75}&\G{2}, \G{3}, \G{9}, \G{1}, \G{6} &\multicolumn{1}{c|}{2}&\multicolumn{2}{c}{}\\
\cline{1-3}

\end{longtable}}

As a reminder, given two integers $n$ and $m$ that satisfy condition (\ref{m>12})$, \mathcal{P}'_{n,m}$ is the polytope defined by all active facets for $(n,m)$. We now prove that the set of extreme points of $\mathcal{P}'_{n,m}$ is a subset of the 21 points described in Section \ref{sec:realizable}. We therefore have $\mathcal{P}'_{n,m}=\mathcal{P}_{n,m}$.

It is proved in Section \ref{sec:facet} that $\mathcal{P}'_{n,m}$ is bounded for all $n$ and $m$ that satisfy condition (\ref{m>12}). So, let $I$ be the set of indices $i$ such that F$i$ is active for $(n,m)$. It is sufficient to prove that given any subset of three distinct indices $i_1,i_2,i_3$ in $I$, the point at the intersection of the hyperplanes defined by F$i_1$, F$i_2$ and F$i_3$ (if it exists) is either outside $\mathcal{P}'_{n,m}$, or it is one of the 21 points of Table \ref{vi} which is realizable for $(n,m)$ according to Table \ref{ConditionsVi}.
Let's illustrate this process with an example. 

\begin{lem}\label{P70}
    If $n\bmod 2=1, m\bmod 3=0$ and $\frac{6n}{5}\le m\le \frac{3n-5}{2}$, then  the extreme points of $\mathcal{P}'_{n,m}$ are \emph{\V{6}, \V{8c}, \V{8d}, \V{11a}=\V{11b}=\V{11c}, \V{12b}} and \emph{\V{12c}}. 
\end{lem}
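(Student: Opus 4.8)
The plan is to apply the general procedure set up just before the statement: since $\mathcal{P}'_{n,m}$ is bounded, each of its extreme points lies on (at least) three of its active facets, so it suffices to list the active facets, intersect them three at a time, discard the intersections that leave the polytope, and identify the survivors with points of Table \ref{vi}.

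First I would pin down the active facets. Under the hypotheses $n\bmod 2=1$, $m\bmod 3=0$ and $\frac{6n}{5}\le m\le\frac{3n-5}{2}$, this is polytope type P70 of Table \ref{polytopes}, whose active facets are \G{2}, \G{3}, \G{1}, \G{4} and \G{6}. Indeed $m\ge\frac{6n}{5}>\frac{6n-4}{5}$ excludes \G{5}, the congruence $m\bmod 3=0$ excludes \G{8}, \G{9}, \G{10}, \G{11}, and every remaining facet requires $m\in\{n-1,n,n+1\}$ or $n$ even. One subtlety deserves note: $m=\frac{3n-5}{2}$ would force $m\equiv 2\pmod 3$, so under $m\bmod 3=0$ the upper bound is in fact strict, giving $m\le\frac{3n-6}{2}$ and keeping \G{4} active. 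I would also record that $m\bmod 3=0$ makes the three formulas for \V{11a}, \V{11b}, \V{11c} coincide into the single point $\left(\frac{3n-2m}{3},0,\frac{7m-6n}{3}\right)$.

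Next I would write the five facet hyperplanes as $m_{12}=0$, $m_{13}=0$, $-4m_{12}-3m_{13}+m_{33}=5m-6n$, $m_{12}+m_{13}-m_{33}=3n-3m$ and $2m_{12}+m_{13}-m_{33}=3n-3m+1$, and solve each of the $\binom{5}{3}=10$ resulting $3\times 3$ systems by elimination. I expect six triples to give the claimed vertices: \{\G{2},\G{3},\G{1}\}, \{\G{2},\G{3},\G{6}\}, \{\G{2},\G{1},\G{6}\}, \{\G{3},\G{1},\G{4}\}, \{\G{3},\G{4},\G{6}\} and \{\G{1},\G{4},\G{6}\} produce, respectively, \V{6}, \V{12b}, \V{8c}, \V{11a}, \V{12c} and \V{8d}. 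For the four remaining triples I expect: \{\G{2},\G{3},\G{4}\} and \{\G{2},\G{1},\G{4}\} to give points violating \G{6} (the latter being non-integer as well, since for odd $n$ it has $m_{13}=\frac{3n-2m}{2}$), \{\G{3},\G{1},\G{6}\} to give a point violating \G{4}, and \{\G{2},\G{4},\G{6}\} to have empty intersection, because \G{4} and \G{6} meet only where $m_{12}=1$, incompatible with \G{2}. For each of the six survivors I would then check the two non-defining active inequalities, each of which reduces to a bound such as $m\le\frac{3n-1}{2}$, $m\le\frac{3n-3}{2}$ or $m\le\frac{3n}{2}$ that is implied by $m\le\frac{3n-5}{2}$, and confirm the realizability conditions of Table \ref{ConditionsVi}; the only nontrivial ones are $n+2\le m$ for \V{12b} and \V{12c}, and $\lfloor\frac{6n+4}{5}\rfloor\le m$ for \V{6}, all of which follow from $m\ge\frac{6n}{5}$ and the integrality of $m$ (the range being empty unless $n\ge 11$).

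The argument is bookkeeping rather than insight; the main obstacle is to carry out the ten eliminations and, for the six survivors, the feasibility, integrality and realizability checks without slips, invoking the congruence $m\bmod 3=0$ and the range bounds exactly where needed so that the four non-vertices are correctly discarded and the coordinates of each surviving point match the entries of Table \ref{vi}.
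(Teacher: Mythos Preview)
Your proposal is correct and follows essentially the same approach as the paper: identify the five active facets of P70, examine all ten triples of facet hyperplanes, match the six feasible intersections with \V{6}, \V{8c}, \V{8d}, \V{11a}, \V{12b}, \V{12c}, and discard the remaining four exactly as the paper does (two violating \G{6}, one violating \G{4}, one empty). Your additional remarks---that $m=\frac{3n-5}{2}$ is excluded by $m\bmod 3=0$ so \G{4} stays active, and the explicit verification of the realizability conditions from Table~\ref{ConditionsVi}---are sound refinements that the paper leaves implicit.
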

\begin{proof}
Note first that Table \ref{ConditionsVi} shows that \V{6}, \V{8c}, \V{8d}, $\V{11a}=\V{11b}=\V{11c}, \V{12b}$ and \V{12c} are realizable for $(n,m)$. As shown in Table \ref{polytopes}, the conditions on $n$ and $m$ are associated with P70 which has \G{2}, \G{3}, \G{1}, \G{4} and \G{6} as active facets. Hence, $I=\{1,2,6,12,13\}$. Let's consider all triplets ($i_1,i_2,i_3$) of indices in $I$.
\begin{itemize}\setlength\itemsep{-1pt}
	\item $(i_1,i_2,i_3)=(1,2,6)$. The intersection point of the hyperplanes defined by \G{2}, \G{3}, \G{1}, is $\V{6}=(0,0,5m-6n)$. Inequalities \G{4} and \G{6} are satisfied since $m_{12}+m_{13}-m_{33}=2m_{12}+m_{13}-m_{33}=6n-5m>3n-3m+1$. Hence, \V{6} is an extreme point.
	\item $(i_1,i_2,i_3)=(1,2,12)$. The intersection point of the hyperplanes defined by  \G{2}, \G{3}, \G{4} is $(0,0,3m-3n)$. Inequality \G{6} is violated since $2m_{12}+m_{13}-m_{33}=3n-3m<3n-3m+1$. Hence, this point is outside the polytope.
	\item $(i_1,i_2,i_3)=(1,2,13)$. The intersection point of the hyperplanes defined by  \G{2}, \G{3}, \G{6} is $\V{12b}=(0,0,3m-3n-1)$. Inequalities \G{1} and \G{4} are satisfied since $-4m_{12}-3m_{13}+m_{33}=3m-3n-1>5m-6n$ and $m_{12}+m_{13}-m_{33}=3n-3m+1>3n-3m$. Hence, \V{12b} is an extreme point.
\item $(i_1,i_2,i_3)=(1,6,12)$.The intersection point of the hyperplanes defined by  \G{2}, \G{1}, \G{4} is $(0,\frac{3n-2m}{2},\frac{4m-3n}{2})$. Inequality \G{6} is violated since $2m_{12}+m_{13}-m_{33}=3n-3m<3n-3m+1$. Hence, this point is outside the polytope.
	\item $(i_1,i_2,i_3)=(1,6,13)$. The intersection point of the hyperplanes defined by  \G{2}, \G{1}, \G{6} is $\V{8c}=(0,\frac{3n-2m-1}{2},\frac{4m-3n-3}{2})$. Inequalities \G{3} and \G{4} are satisfied since $3n-2m-1>0$ and $m_{12}+m_{13}-m_{33}=3n-3m+1>3n-3m$. Hence, \V{8c} is an extreme point.
	\item $(i_1,i_2,i_3)=(1,12,13)$. There is no intersection point for the hyperplanes defined by \G{2}, \G{4}, \G{6} since \G{2} implies $m_{12}=0$, while \G{6}-\G{4} implies $m_{12}=1$.
	\item $(i_1,i_2,i_3)=(2,6,12)$. The intersection point of the hyperplanes defined by  \G{3}, \G{1}, \G{4} is $\V{11a}=\V{11b}=\V{11c}=(\frac{3n-2m}{3},0,\frac{7m-6n}{3})$. Inequalities \G{2} and \G{6} are satisfied since $3n-2m>0$ and $2m_{12}+m_{13}-m_{33}=3n-3m+\frac{3n-2m}{3}>3n-3m+1$. Hence, \V{11a}=\V{11b}=\V{11c} is an extreme point.
	\item $(i_1,i_2,i_3)=(2,6,13)$. The intersection point of the hyperplanes defined by  \G{3}, \G{1}, \G{6} is $(\frac{3n-2m-1}{2},0,m-2)$. Inequality \G{4} is violated since $m_{12}+m_{13}-m_{33}=3n-3m+\frac{2m-3n+3}{2}\le 3n-3m-1$. Hence, this point is outside the polytope.
	\item $(i_1,i_2,i_3)=(2,12, 13)$. The intersection point of inequalities \G{3}, \G{4}, \G{6} is $\V{12c}=(1,0,3m-3n+1)$. Inequalities \G{2} and \G{1} are satisfied since $1>0$ and $-4m_{12}-3m_{13}+m_{33}=3m-3n-3>5m-6n$. Hence, \V{12c} is an extreme point.
	\item $(i_1,i_2,i_3)=(6,12,13)$. The intersection point of the hyperplanes defined by  \G{1}, \G{4}, \G{6} is $\V{8d}=(1,\frac{3n-2m-3}{2},\frac{4m-3n-1}{2})$. Inequalities \G{2} and \G{3} are satisfied since $1>0$ and $3n-2m-3>0$. Hence, $\V{8d}$ is an extreme point.\qedhere
\end{itemize}
\end{proof}

\begin{table}
\scriptsize	\centering
	\caption{Extreme points for the 75 polytopes of Table \ref{polytopes} when $(n,m)$ satisfies condition (\ref{m>12})}\label{extreme}~\\
\renewcommand{\arraystretch}{0.83}	
\begin{tabular}{|c|l|}
\hline
\vspace{-4pt}&\\
Polytopes & \multicolumn{1}{c|}{Extreme points}\\
\vspace{-4pt}&\\
\hline
\vspace{-4pt}&\\
P1 & \V{2}, \V{7a}, \V{7b}, \V{7c}, \V{8c}, \V{10a}, \V{10b}, \V{10c}, \V{11a}, \V{11b}, \V{11c}, \V{12a} \\
P2 & \V{2}, \V{7a}, \V{7b}, \V{7c}, \V{8c}, \V{8d}, \V{10a}=\V{10b}=\V{10c}, \V{11a}=\V{11b}=\V{11c}, \V{12a} \\
P3 & \V{2}, \V{7a}, \V{7b}, \V{7c}, \V{8c}, \V{10a}, \V{10b}, \V{10c}, \V{11a}, \V{11b}, \V{11c}, \V{12a} \\
P4 & \V{2}, \V{7a}=\V{7b}=\V{7c}, \V{8c}, \V{8d}, \V{10a}, \V{10b}, \V{10c}, \V{11a}, \V{11b}, \V{11c}, \V{12a} \\
P5 & \V{2}, \V{7a}, \V{7b}, \V{7c}, \V{8c}, \V{10a}=\V{10b}=\V{10c}, \V{11a}=\V{11b}=\V{11c}, \V{12a} \\
P6 & \V{2}, \V{7a}, \V{7b}, \V{7c}, \V{8c}, \V{8d}, \V{10a}, \V{10b}, \V{10c}, \V{11a}, \V{11b}, \V{11c}, \V{12a} \\
P7 & \V{2}, \V{7a}, \V{7b}, \V{7c}, \V{8c}, \V{10a}, \V{10b}, \V{10c}, \V{11a}, \V{11b}, \V{11c}, \V{12a} \\
P8 & \V{2}, \V{7a}=\V{7b}=\V{7c}, \V{8c}, \V{8d}, \V{10a}=\V{10b}=\V{10c}, \V{11a}=\V{11b}=\V{11c}, \V{12a} \\
P9 & \V{2}, \V{7a}, \V{7b}, \V{7c}, \V{8c}, \V{10a}, \V{10b}, \V{10c}, \V{11a}, \V{11b}, \V{11c}, \V{12a} \\
P10 & \V{2}, \V{7a}, \V{7b}, \V{7c}, \V{8c}, \V{8d}, \V{10a}, \V{10b}, \V{10c}, \V{11a}, \V{11b}, \V{11c}, \V{12a} \\
P11 & \V{2}, \V{7a}, \V{7b}, \V{7c}, \V{8c}, \V{10a}=\V{10b}=\V{10c}, \V{11a}=\V{11b}=\V{11c}, \V{12a} \\
P12 & \V{2}, \V{7a}=\V{7b}=\V{7c}, \V{8c}, \V{8d}, \V{10a}, \V{10b}, \V{10c}, \V{11a}, \V{11b}, \V{11c}, \V{12a} \\
P13 & \V{1}, \V{7a}=\V{7b}=\V{7c}, \V{8c}, \V{10a}=\V{10b}=\V{10c}, \V{11a}=\V{11b}=\V{11c} \\
P14 & \V{1}, \V{7a}, \V{7b}, \V{7c}, \V{8c}, \V{8d}, \V{10a}, \V{10b}, \V{10c}, \V{11a}, \V{11b}, \V{11c} \\
P15 & \V{1}, \V{7a}, \V{7b}, \V{7c}, \V{8c}, \V{10a}, \V{10b}, \V{10c}, \V{11a}, \V{11b}, \V{11c} \\
P16 & \V{1}, \V{7a}, \V{7b}, \V{7c}, \V{8c}, \V{8d}, \V{10a}=\V{10b}=\V{10c}, \V{11a}=\V{11b}=\V{11c} \\
P17 & \V{1}, \V{7a}=\V{7b}=\V{7c}, \V{8c}, \V{10a}, \V{10b}, \V{10c}, \V{11a}, \V{11b}, \V{11c} \\
P18 & \V{1}, \V{7a}, \V{7b}, \V{7c}, \V{8c}, \V{8d}, \V{10a}, \V{10b}, \V{10c}, \V{11a}, \V{11b}, \V{11c} \\
P19 & \V{1}, \V{7a}, \V{7b}, \V{7c}, \V{8c}, \V{10a}=\V{10b}=\V{10c}, \V{11a}=\V{11b}=\V{11c} \\
P20 & \V{1}, \V{7a}, \V{7b}, \V{7c}, \V{8c}, \V{8d}, \V{10a}, \V{10b}, \V{10c}, \V{11a}, \V{11b}, \V{11c} \\
P21 & \V{1}, \V{7a}=\V{7b}=\V{7c}, \V{8c}, \V{10a}, \V{10b}, \V{10c}, \V{11a}, \V{11b}, \V{11c} \\
P22 & \V{1}, \V{7a}, \V{7b}, \V{7c}, \V{8c}, \V{8d}, \V{10a}=\V{10b}=\V{10c}, \V{11a}=\V{11b}=\V{11c} \\
P23 & \V{1}, \V{7a}, \V{7b}, \V{7c}, \V{8c}, \V{10a}, \V{10b}, \V{10c}, \V{11a}, \V{11b}, \V{11c} \\
P24 & \V{1}, \V{7a}, \V{7b}, \V{7c}, \V{8c}, \V{8d}, \V{10a}, \V{10b}, \V{10c}, \V{11a}, \V{11b}, \V{11c} \\
P25 & \V{1}, \V{3}, \V{7a}, \V{7b}, \V{7c}, \V{8c}, \V{9a}, \V{9b}, \V{9c}, \V{10a}, \V{10b}, \V{10c}, \V{11a}, \V{11b}, \V{11c} \\
P26 & \V{1}, \V{3}, \V{7a}=\V{7b}=\V{7c}, \V{8c}, \V{8d}, \V{9a}, \V{9b}, \V{9c}, \V{10a}, \V{10b}, \V{10c}, \V{11a}, \V{11b}, \V{11c} \\
P27 & \V{1}, \V{3}, \V{7a}, \V{7b}, \V{7c}, \V{8c}, \V{9a}, \V{9b}, \V{9c}, \V{10a}=\V{10b}=\V{10c}, \V{11a}=\V{11b}=\V{11c} \\
P28 & \V{1}, \V{3}, \V{7a}, \V{7b}, \V{7c}, \V{8c}, \V{8d}, \V{9a}, \V{9b}, \V{9c}, \V{10a}, \V{10b}, \V{10c}, \V{11a}, \V{11b}, \V{11c} \\
P29 & \V{1}, \V{3}, \V{7a}, \V{7b}, \V{7c}, \V{8c}, \V{9a}, \V{9b}, \V{9c}, \V{10a}, \V{10b}, \V{10c}, \V{11a}, \V{11b}, \V{11c} \\
P30 & \V{1}, \V{3}, \V{7a}=\V{7b}=\V{7c}, \V{8c}, \V{8d}, \V{9a}, \V{9b}, \V{9c}, \V{10a}=\V{10b}=\V{10c}, \V{11a}=\V{11b}=\V{11c} \\
P31 & \V{1}, \V{3}, \V{7a}, \V{7b}, \V{7c}, \V{8c}, \V{9a}, \V{9b}, \V{9c}, \V{10a}, \V{10b}, \V{10c}, \V{11a}, \V{11b}, \V{11c} \\
P32 & \V{1}, \V{3}, \V{7a}, \V{7b}, \V{7c}, \V{8c}, \V{8d}, \V{9a}, \V{9b}, \V{9c}, \V{10a}, \V{10b}, \V{10c}, \V{11a}, \V{11b}, \V{11c} \\
P33 & \V{1}, \V{3}, \V{7a}, \V{7b}, \V{7c}, \V{8c}, \V{9a}, \V{9b}, \V{9c}, \V{10a}=\V{10b}=\V{10c}, \V{11a}=\V{11b}=\V{11c} \\
P34 & \V{1}, \V{3}, \V{7a}=\V{7b}=\V{7c}, \V{8c}, \V{8d}, \V{9a}, \V{9b}, \V{9c}, \V{10a}, \V{10b}, \V{10c}, \V{11a}, \V{11b}, \V{11c} \\
P35 & \V{1}, \V{3}, \V{7a}, \V{7b}, \V{7c}, \V{8c}, \V{9a}, \V{9b}, \V{9c}, \V{10a}, \V{10b}, \V{10c}, \V{11a}, \V{11b}, \V{11c} \\
P36 & \V{1}, \V{3}, \V{7a}, \V{7b}, \V{7c}, \V{8c}, \V{8d}, \V{9a}, \V{9b}, \V{9c}, \V{10a}=\V{10b}=\V{10c}, \V{11a}=\V{11b}=\V{11c} \\
P37 & \V{1}, \V{7a}=\V{7b}=\V{7c}, \V{8c}, \V{10a}=\V{10b}=\V{10c}, \V{11a}=\V{11b}=\V{11c}, \V{12b}, \V{12c} \\
P38 & \V{1}, \V{7a}, \V{7b}, \V{7c}, \V{8c}, \V{10a}=\V{10b}=\V{10c}, \V{11a}=\V{11b}=\V{11c}, \V{12b}, \V{12c} \\
P39 & \V{1}, \V{7a}, \V{7b}, \V{7c}, \V{8c}, \V{10a}=\V{10b}=\V{10c}, \V{11a}=\V{11b}=\V{11c}, \V{12b}, \V{12c} \\
P40 & \V{1}, \V{7a}, \V{7b}, \V{7c}, \V{8c}, \V{10a}=\V{10b}=\V{10c}, \V{11a}=\V{11b}=\V{11c}, \V{12b}, \V{12c} \\
P41 & \V{1}, \V{7a}=\V{7b}=\V{7c}, \V{8c}, \V{10a}, \V{10b}, \V{10c}, \V{11a}, \V{11b}, \V{11c}, \V{12b}, \V{12c} \\
P42 & \V{1}, \V{7a}, \V{7b}, \V{7c}, \V{8c}, \V{10a}, \V{10b}, \V{10c}, \V{11a}, \V{11b}, \V{11c}, \V{12b}, \V{12c} \\
P43 & \V{1}, \V{7a}, \V{7b}, \V{7c}, \V{8c}, \V{10a}, \V{10b}, \V{10c}, \V{11a}, \V{11b}, \V{11c}, \V{12b}, \V{12c} \\
P44 & \V{1}, \V{7a}, \V{7b}, \V{7c}, \V{8c}, \V{10a}, \V{10b}, \V{10c}, \V{11a}, \V{11b}, \V{11c}, \V{12b}, \V{12c} \\
P45 & \V{1}, \V{7a}=\V{7b}=\V{7c}, \V{8c}, \V{10a}, \V{10b}, \V{10c}, \V{11a}, \V{11b}, \V{11c}, \V{12b}, \V{12c} \\
P46 & \V{1}, \V{7a}, \V{7b}, \V{7c}, \V{8c}, \V{10a}, \V{10b}, \V{10c}, \V{11a}, \V{11b}, \V{11c}, \V{12b}, \V{12c} \\
P47 & \V{1}, \V{7a}, \V{7b}, \V{7c}, \V{8c}, \V{10a}, \V{10b}, \V{10c}, \V{11a}, \V{11b}, \V{11c}, \V{12b}, \V{12c} \\
\multirow{ 2}{*}{\hspace{20pt}P48\hspace{1pt}\normalsize$\quad\left\{\right. $} & \V{1}, \V{7b}, \V{7c}, \V{8c}, \V{10a}, \V{10c}, \V{11a}, \V{11b}, \V{11c}, \V{12b}, \V{12c} \hspace{2.2cm}if $n\bmod 5=0$ and $m=\frac{6n-5}{5}$\\
& \V{1}, \V{7a}, \V{7b}, \V{7c}, \V{8c}, \V{10a}, \V{10b}, \V{10c}, \V{11a}, \V{11b}, \V{11c}, \V{12b}, \V{12c} \hspace{0.9cm}otherwise\\
P49 & \V{1}, \V{7a}=\V{7b}=\V{7c}, \V{8c}, \V{8d}, \V{10a}=\V{10b}=\V{10c}, \V{11a}=\V{11b}=\V{11c}, \V{12b}, \V{12c} \\
P50 & \V{1}, \V{7a}, \V{7b}, \V{7c}, \V{8c}, \V{8d}, \V{10a}=\V{10b}=\V{10c}, \V{11a}=\V{11b}=\V{11c}, \V{12b}, \V{12c} \\
P51 & \V{1}, \V{7a}, \V{7b}, \V{7c}, \V{8c}, \V{8d}, \V{10a}=\V{10b}=\V{10c}, \V{11a}=\V{11b}=\V{11c}, \V{12b}, \V{12c} \\
P52 & \V{1}, \V{7a}, \V{7b}, \V{7c}, \V{8c}, \V{8d}, \V{10a}=\V{10b}=\V{10c}, \V{11a}=\V{11b}=\V{11c}, \V{12b}, \V{12c} \\
P53 & \V{1}, \V{7a}=\V{7b}=\V{7c}, \V{8c}, \V{8d}, \V{10a}, \V{10b}, \V{10c}, \V{11a}, \V{11b}, \V{11c}, \V{12b}, \V{12c} \\
P54 & \V{1}, \V{7a}, \V{7b}, \V{7c}, \V{8c}, \V{8d}, \V{10a}, \V{10b}, \V{10c}, \V{11a}, \V{11b}, \V{11c}, \V{12b}, \V{12c} \\
P55 & \V{1}, \V{7a}, \V{7b}, \V{7c}, \V{8c}, \V{8d}, \V{10a}, \V{10b}, \V{10c}, \V{11a}, \V{11b}, \V{11c}, \V{12b}, \V{12c} \\
P56 & \V{1}, \V{7a}, \V{7b}, \V{7c}, \V{8c}, \V{8d}, \V{10a}, \V{10b}, \V{10c}, \V{11a}, \V{11b}, \V{11c}, \V{12b}, \V{12c} \\
P57 & \V{1}, \V{7a}=\V{7b}=\V{7c}, \V{8c}, \V{8d}, \V{10a}, \V{10b}, \V{10c}, \V{11a}, \V{11b}, \V{11c}, \V{12b}, \V{12c} \\
P58 & \V{1}, \V{7a}, \V{7b}, \V{7c}, \V{8c}, \V{8d}, \V{10a}, \V{10b}, \V{10c}, \V{11a}, \V{11b}, \V{11c}, \V{12b}, \V{12c} \\
P59 & \V{1}, \V{7a}, \V{7b}, \V{7c}, \V{8c}, \V{8d}, \V{10a}, \V{10b}, \V{10c}, \V{11a}, \V{11b}, \V{11c}, \V{12b}, \V{12c} \\
\multirow{ 2}{*}{\hspace{20pt}P60\hspace{1pt}\normalsize$\quad\left\{\right. $} 
& \V{1}, \V{7b}, \V{7c}, \V{8c}, \V{8d}, \V{10a}, \V{10c}, \V{11a}, \V{11b}, \V{11c}, \V{12b}, \V{12c} \hspace{1.5cm}if $n\bmod 5=0$ and $m=\frac{6n-5}{5}$\\
& \V{1}, \V{7a}, \V{7b}, \V{7c}, \V{8c}, \V{8d}, \V{10a}, \V{10b}, \V{10c}, \V{11a}, \V{11b}, \V{11c}, \V{12b}, \V{12c} \hspace{0.2cm}otherwise\\
P61 & \V{1}=\V{7c}, \V{7a}=\V{10a}=\V{10b}=\V{10c}, \V{7b}, \V{8c}, \V{11a}=\V{11b}=\V{11c}, \V{12b}, \V{12c}\\
P62 & \V{1}=\V{7c}=\V{10a}, \V{7b}, \V{8c}, \V{10c}, \V{11a}, \V{11b}, \V{11c}, \V{12b}, \V{12c} \\
P63 & \V{1}=\V{7c}=\V{10a}, \V{7b}, \V{8c}, \V{10c}, \V{11a}, \V{11b}, \V{11c}, \V{12b}, \V{12c} \\
P64 & \V{1}=\V{7c}, \V{7a}=\V{10a}=\V{10b}=\V{10c}, \V{7b}, \V{8c}, \V{8d}, \V{11a}=\V{11b}=\V{11c}, \V{12b}, \V{12c} \\
P65 & \V{1}=\V{7c}=\V{10a}, \V{7b}, \V{8c}, \V{8d}, \V{10c}, \V{11a}, \V{11b}, \V{11c}, \V{12b}, \V{12c} \\
P66 & \V{1}=\V{7c}=\V{10a}, \V{7b}, \V{8c}, \V{8d}, \V{10c}, \V{11a}, \V{11b}, \V{11c}, \V{12b}, \V{12c} \\
P67 & \V{6}, \V{8c}, \V{11a}=\V{11b}=\V{11c}, \V{12b}, \V{12c} \\
P68 & \V{6}, \V{8c}, \V{11a}, \V{11b}, \V{11c}, \V{12b}, \V{12c} \\
P69 & \V{6}, \V{8c}, \V{11a}, \V{11b}, \V{11c}, \V{12b}, \V{12c} \\
P70 & \V{6}, \V{8c}, \V{8d}, \V{11a}=\V{11b}=\V{11c}, \V{12b}, \V{12c} \\
P71 & \V{6}, \V{8c}, \V{8d}, \V{11a}, \V{11b}, \V{11c}, \V{12b}, \V{12c} \\
P72 & \V{6}, \V{8c}, \V{8d}, \V{11a}, \V{11b}, \V{11c}, \V{12b}, \V{12c} \\
P73 & \V{6}, \V{8c}, \V{8d}=\V{11a}=\V{11b}=\V{11c}=\V{12c}, \V{12b} \\
P74 & \V{6}, \V{11c}=\V{12c}, \V{8c}=\V{11b}, \V{11a}, \V{12b} \\
P75 & \V{6}, \V{11c}=\V{12c}, \V{8c}, \V{8d}=\V{11b}, \V{11a}, \V{12b} \\\hline
\end{tabular}
\end{table}

The same analysis can be done for the 74 other polytope types. We give in Table \ref{extreme} the set of extreme points for the 75 distinct polytopes\footnote{These were obtained with the help of a dedicated procedure available at \url{https://github.com/umons-dept-comp-sci/chemichull-paper}.}. For two extreme points V$i$ and V$j$ of a polytope, we write V$i$=V$j$ only if the two points are always equal in the considered polytope. 
When V$i$ and V$j$ are not indicated as being equal, they may coincide for some pairs $(n,m)$, but there is at least one pair $(n,m)$ for which they are not equal. 
For example, for P2, we have \V{11a}=\V{11b}, meaning that these two points are always equal when $n\bmod 12=1$. Still for P2, \V{7b} and \V{11a} are typically distinct, while \V{7b}=\V{11a}(=\V{11b}=\V{11c})=(5,0,2) for $n=13$.

\begin{table}
	\scriptsize	\centering
	\caption{Extreme points for $\mathcal{P}_{n, m}$ when $(n,m)$ satisfy~\eqref{m>12} and $n$ is even}\label{Tab:synthese}~\\
	\setlength{\tabcolsep}{1pt}
	\begin{tabular}{|cccccccccccccccccccc|ll}
\multicolumn{1}{c}{\V{1}}&\V{2}&\V{3}&\V{6}&\V{7a}&\V{7b}&\V{7c}&\V{8c}&\V{9a}&\V{9b}&\V{9c}&\V{10a}&\V{10b}&\V{10c}&\V{11a}&\V{11b}&\V{11c}&\V{12a}&\V{12b}&\multicolumn{1}{c}{\V{12c}}&&\\\cline{1-20}
&x&&&x&x&x&x&&&&x&x&x&x&x&x&x&&&\multicolumn{2}{l}{$\quad m=n-1$ (chemical trees)}\\
x&&&&x&x&x&x&&&&x&x&x&x&x&x&&&&\multicolumn{2}{l}{$\quad m=n\hspace{0.5cm}$ (unicyclic chemical graphs)}\\
x&&x&&x&x&x&x&x&x&x&x&x&x&x&x&x&&&&\multicolumn{2}{l}{$\quad m=n+1$ (bicyclic chemical graphs)}\\
x&&&&x&x&x&x&&&&x&x&x&x&x&x&&x&x&$\quad n + 1<m<\frac{6n}{5}$ and &$6n{-}5m{\notin}\{1,2,5\}$\\
x&&&&&x&x&x&&&&x&&x&x&x&x&&x&x&$\quad n + 1<m<\frac{6n}{5}$ and &$6n{-}5m{\in}\{1,2,5\}$\\
&&&x&&&&x&&&&&&&x&x&x&&x&x&$\quad\frac{6n}{5} \le m\leq \left\lfloor\frac{3n-3}{2} \right\rfloor$&\\\cline{1-20}
\end{tabular}\end{table}
As indicated in Table \ref{polytopes}, polytopes P1–P12 relate to chemical trees since they are defined for  
$m=n-1$. It is interesting to observe that if 
$n$ is even, the six polytopes P1, P3, P5, P7, P9, and P11 share the same set of extreme points, namely: \V{2}, \V{7a}, \V{7b}, \V{7c}, \V{8c}, \V{10a}, \V{10b}, \V{10c}, \V{11a}, \V{11b}, \V{11c}, and  \V{12a}. Some of these points are sometimes equal to each other. If 
$n$ is odd, point \V{8d} is added to this list. By performing a similar analysis for other values of 
$m$, we obtain Table \ref{Tab:synthese}, which shows the extreme points for an even number of vertices. If the number of vertices is odd, point \V{8d}  should be added to the lists in Table \ref{Tab:synthese}.

Let's now analyze the pairs $(n,m)$ that satisfy condition (\ref{val_n_m}), but not condition (\ref{m>12}). The cases where $n\geq 6$ and $m=\frac{3n-2}{2}$ or when $n\ge 4$ and $m=\lfloor\frac{3n}{2}\rfloor$ are treated in Section \ref{sec:Degenerated}. The other cases which all have $m\leq 11$ can easily be analyzed by generating all chemical graphs of order $n$ and size $m$, using for example \emph{Nauty's geng}~\cite{geng}, and then building the polytope $\mathcal{P}_{n,m}$ for determining its extreme points. It is therefore not difficult to determine the extreme points for all these pairs $(n,m)$. We list in Table \ref{m<12} the  extreme points for these 32 particular cases. The third column of the table indicates the dimension of $\mathcal{P}_{n,m}$.
We observe that among these 32 polytopes, 5 are of dimension 0, 3 of dimension 1, 2 of dimension 2 and 22 of dimension 3. Among the 22 full-dimensional polytopes, 11 are special cases of the polytopes of Table \ref{polytopes}. We indicate their Id in the last column of Table \ref{m<12}. The 11 new full-dimensional polytopes are named P76-P86, and their Id appears in italics in the last column of Table \ref{m<12}.

\begin{table}[!htb]
\scriptsize\centering
\caption{Extreme points of $\mathcal{P}_{n,m}$ for pairs $(n,m)$ satisfying (\ref{val_n_m}) but not (\ref{m>12})}\label{m<12}~\\[1ex]
\begin{tabular}{|c|c|c|l|c|} 
\cline{1-5}
\multicolumn{1}{|c|}{$n$} & \multicolumn{1}{c}{$m$} & \multicolumn{1}{|c}{dim.} & \multicolumn{1}{|c|}{extreme points} &\multicolumn{1}{c|}{Id}\\ \cline{1-5}
$3$ & $2$ & $0$ & (2,0,0)&\\ 
$3$ & $3$ & $0$ & (0,0,0)&\\ 
$4$ & $3$ & $1$ & (0,3,0),\;(2,0,0)&\\ 
$4$ & $4$ & $1$ & (0,0,0), (0,1,0)&\\ 
$4$ & $5$ & $0$ & (0,0,1)&\\ 
$5$ & $4$ & $1$ & (1,2,0), (2,0,0)&\\ 
$5$ & $5$ & $3$ & (0,0,0), (0,1,0), (0,2,1), (1,0,0)&\it{P76}\\ 
$5$ & $6$ & $2$ & (0,0,0), (0,0,1), (0,1,3)&\\ 
$6$ & $5$ & $3$ & (0,4,1), (1,2,0), (2,0,0), (2,1,0)&\it{P77}\\ 
$6$ & $6$ & $3$ & (0,0,0), (0,2,0), (0,3,3), (1,0,0), (1,1,1)&\it{P78}\\ 
$6$ & $7$ & $3$ & (0,0,0), (0,0,1), (0,1,2), (0,2,5), (1,0,3)&\it{P79}\\ 
$7$ & $6$ & $3$ & (0,4,0), (1,3,1), (2,0,0), (3,0,0)&\it{P80}\\ 
$7$ & $7$ & $3$ & (0,0,0), (0,2,0), (0,3,2), (1,0,0), (1,1,0), (1,2,3), (2,0,1)&\it{P81}\\ 
$7$ & $8$ & $3$ & (0,0,0), (0,0,1), (0,1,1), (0,1,3), (0,2,4), (1,0,2), (1,0,3), (1,1,5)&\it{P82}\\ 
$7$ & $9$ & $3$ & (0,0,3), (0,0,5), (0,1,6), (1,0,7)&P73\\ 
$8$ & $7$ & $3$ & (0,4,0), (0,5,2), (1,3,0), (2,0,0), (2,2,1), (3,0,0)&\it{P83}\\ 
$8$ & $8$ & $3$ & (0,0,0), (0,2,0), (0,3,1), (0,4,4), (2,0,0), (2,0,1), (2,1,3)&\it{P84}\\ 
$8$ & $9$ & $3$ & (0,0,0), (0,0,1), (0,1,0), (0,1,3), (0,3,6), (1,0,1), (1,1,5), (2,0,5)&\it{P85}\\ 
$8$ & $10$ & $3$ & (0,0,2), (0,0,5), (0,2,8), (1,0,6), (1,0,7)&P74\\ 
$9$ & $8$ & $3$ & (0,4,0), (0,5,1), (1,4,2), (2,0,0), (2,2,0), (3,0,0), (3,1,1)&\it{P86}\\ 
$9$ & $9$ & $3$ & (0,0,0), (0,3,0), (0,4,3), (1,3,4), (2,0,0), (3,0,3)&P22\\ 
$9$ & $10$ & $3$ & (0,0,0), (0,0,1), (0,1,0), (0,1,3), (0,2,2), (0,3,5), (1,0,0), (1,1,5), (1,2,6), (2,0,4), (2,0,5)&P34\\ 
$9$ & $11$ & $3$ & (0,0,1), (0,0,5), (0,2,7), (1,0,5), (1,0,7), (1,1,8)&P75\\ 
$10$ & $9$ & $3$ & (0,4,0), (0,5,0), (0,6,3), (2,0,0), (3,0,0), (3,1,0), (4,0,1)&P11\\ 
$10$ & $10$ & $3$ & (0,0,0), (0,3,0), (0,4,2), (0,5,5), (1,2,0), (2,0,0), (2,2,4), (3,0,2), (3,0,3)&P23\\ 
$10$ & $11$ & $3$ & (0,0,0), (0,0,1), (0,1,0), (0,1,3), (0,2,1), (0,4,7), (1,0,0), (1,1,5), (2,0,3), (2,0,5), (2,1,6)&P35\\ 
$11$ & $10$ & $3$ & (0,4,0), (0,5,0), (0,6,2), (1,4,0), (1,5,3), (2,0,0), (3,2,2), (4,0,0), (4,0,1)&P12\\ 
$11$ & $11$ & $3$ & (0,0,0), (0,3,0), (0,4,1), (0,5,4), (1,4,5), (2,0,0), (2,1,0), (3,0,1), (3,0,3), (3,1,4)&P24\\ 
$12$ & $11$ & $3$ & (0,4,0), (0,5,0), (0,6,1), (0,7,4), (2,0,0), (2,3,0), (4,0,0), (4,0,1), (4,1,2)&P1\\ 
even $\ge 4$ & $\frac{3n}{2}$ & $0$ & (0,0,$m$)& \\ 
odd $ \ge 5$ & $\frac{3n-1}{2}$ & $0$ & (0,0,$m-$2)& \\ 
even $\ge 6$ & $\frac{3n-2}{2}$ & $2$ & (0,0,$m-$4), (0,0,$m-$3), (0,1,$m-$1)& \\ \cline{1-5}
\end{tabular}
\end{table}

In summary, our polyhedral description is based on a partition of the set of all possible combinations of $n$ and $m$ into 96 cases: 5 polytopes are of dimension 0, 3 of dimension 1, 2 of dimension 2 and 86 of dimension 3.
The facets of the degenerated polytopes are given in Section \ref{sec:Degenerated}. For completeness, we give in Table \ref{tab_11polytopes} the facets of the 3-dimensional polytopes P76-P86, where F22, F23, F24 and F25 are new facets defined as follows:
\begin{itemize}[nosep]
\item  F22: $2m_{13}-4m_{33}\ge 0$;
\item F23: $4m_{13}-4m_{33}\ge 0$;\item F24: $2m_{12}+5m_{13}-4m_{33}\ge 0$;
\item F25: $2m_{12}+8m_{13}-4m_{33}\ge 0$.
\end{itemize}
\begin{table}[H]
	\scriptsize\centering
\caption{Facets of the full-dimensional polytopes P76-P86}\label{tab_11polytopes}~\\[2ex]
\begin{tabular}{|c|l||c|l|}
\hline Id&facets&Id&facets\\
\hline
P76&\G{2}, \G{5}, \G{11}, F22&P82&\G{1}, \G{2}, \G{3}, \G{6}, \G{9}, \G{14}, \G{16}\\
P77&\G{5}, \G{11}, \G{15}, \G{17}&P83&\G{1}, \G{5}, \G{10}, \G{12}, \G{15}, \G{17}, \G{20}\\
P78&\G{1}, \G{2}, \G{5}, \G{14}, F23&P84&\G{1}, \G{2}, \G{3}, \G{4}, \G{5}, \G{9}, \G{11}, F25\\
P79&\G{1}, \G{2}, \G{3}, \G{7}, \G{12}&P85&\G{1}, \G{2}, \G{3}, \G{4}, \G{7}, \G{13}, \G{16}\\
P80&\G{1}, \G{5}, \G{6}, \G{15}&P86&\G{1}, \G{5}, \G{6}, \G{11}, \G{14}, \G{15}, \G{19}\\
P81&\G{1}, \G{2}, \G{3}, \G{5}, \G{10}, \G{13}, \G{18}, F24&&\\\hline
\end{tabular}
\end{table}

\section{Example of application}\label{sec:application}
In this section, we illustrate how the results of this paper can be applied to determine chemical graphs that maximize or minimize a degree-based topological index. Other examples are given in \cite{ChemicHullPaper}, where we show that many published results can be easily reproduced in a few lines using the polyhedral approach presented in this paper.

Consider the problem of minimizing the Albertson topological index \cite{Albertson} with $c_{ij}=\mid i-j\mid$. As mentioned in Section \ref{sec:basic}, minimizing $c_{12}m_{12}+c_{13}m_{13}+c_{22}m_{22}+c_{23}m_{23}+c_{33}m_{33}$ is equivalent to minimizing $f(m_{12},m_{13},m_{33})=c'_{12}m_{12}+c'_{13}m_{13}+c'_{33}m_{33}$, where
$c'_{12}=c_{12}-4c_{22}+3c_{23}$,
$c'_{13}=c_{13}-3c_{22}+2c_{23}$ and
$c'_{33}=c_{22}-2c_{23}+c_{33}.$
For the Albertson index, this means that we have to minimize $f(m_{12},m_{13},m_{33})=4m_{12}+4m_{13}-2m_{33}$.

If we are interested, for example, in graphs of order $n=13$ and size $m=15$, then we have to consider P64, and as shown in Table~\ref{extreme}, the set of extreme points is $\{$\V{1}{=}\V{7c}, \V{7a}{=}\V{10a}{=}\V{10b}{=}\V{10c}, \V{7b}, \V{8c}, \V{8d}, \V{11a}{=}\V{11b}{=}\V{11c}, \V{12b}, \V{12c}$\}$. Since $f($\V{1})=0, $f($\V{7a})=4, $f($\V{7b})=2, $f($\V{8c})=-2, $f($\V{8d})=-4, $f($\V{11a})=-6 and $f($\V{12b})=$f($\V{12c})=-10, we deduce that the points that minimize the Albertson index are \V{12b}=(0,0,5) and \V{12c}=(1,0,7). Examples of chemical graphs that minimize the Albertson index for $(n,m){=}(13,15)$ are shown in Figure \ref{figure4}.

If we are interested in minimizing the Albertson index for all possible pairs $(n,m)$ that satisfy condition (\ref{m>12}), then we can compute the value $f($V$i)$ for all $i=1,\ldots,21$. These values are given in Table \ref{albertson}.

\begin{figure}[!htb]
	\centering\includegraphics[scale=0.75]{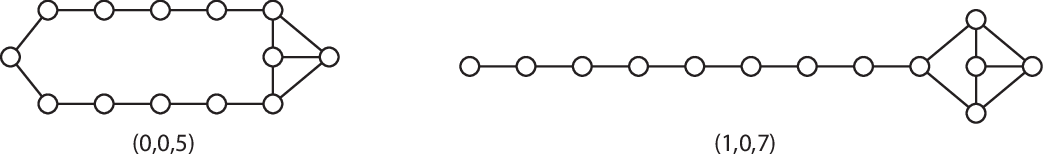}
	\vspace{-0.2cm}\caption{Two chemical graphs that minimize the Albertson index for $(n,m)=(13,15)$}
	\label{figure4}
\end{figure}

\begin{table}[h!]
	\footnotesize
\centering
	\caption{$f($V$i)$ $(i=1,\ldots,23)$ for the Albertson index}\label{albertson}~\\[2ex]
	\begin{tabular}{|c|c||c|c||c|c|} \hline	
    V$i$&$f($V$i)$&V$i$&$f($V$i)$&V$i$&$f($V$i)$\\\hline
    \V{1}&0&                        \V{8c}&$9n-8m+n\bmod 2$&                    \V{10c}&$\frac{4}{3}(6n-5m-\frac{(2m)\bmod 3}{2})$\\
    \V{2}&8&                        \V{8d}&$9n-8m-1$&                           \V{11a}&$\frac{2}{3}(12n-11m+2(m\bmod 3))$\\
    \V{3}&-2&                       \V{9a}&$12m-12n-14$&                        \V{11b}&$\frac{2}{3}(12n-11m+m\bmod 3)$\\
    \V{6}&$12n-10m$&                \V{9b}&$-2$&                                \V{11c}&$\frac{2}{3}(12n-11m-m\bmod 3)$\\
    \V{7a}&$6n-5m+(m-2n)\bmod 4$&   \V{9c}&$-2$&                                \V{12a}&$12n-12m+4$\\
    \V{7b}&$6n-5m-(m-2n)\bmod 4$&   \V{10a}&$\frac{4}{3}(6n-5m-m\bmod 3)$&      \V{12b}&$6n-6m+2$\\
    \V{7c}&$6n-5m-(2n-m)\bmod 4$&   \V{10b}&$\frac{4}{3}(6n-5m-m\bmod 3)$&      \V{12c}&$6n-6m+2$\\\hline
    \end{tabular}
\end{table}

It is not difficult to check that the minimum value of $f($V$i)$ is $12n-12m+4$ for $m\ge n+1$, but the unique point that reaches this value is \V{12a} which is an extreme point only if $m=n-1$, while for $m=n-1$, a better value is obtained, for example by \V{1}. The second best value is $6n-6m+2$ which is reached by \V{12b} and \V{12c} for $m\ge n+1$. Since \V{12b} and \V{12c} are extreme points only if $m\ge n+2$, we deduce that these two points minimize the Albertson index for $m\ge n+2$. The third best value is -2 which is reached by \V{3}, \V{9a}, \V{9b} and \V{9c} when $m=n+1$. The fourth best value is 0 and is reached by \V{1} for $m=n$. The fifth best value is 8 which is reached by \V{2} for $m=n-1$. Note that when there are more than one extreme point for a pair $(n,m)$, then, according to Theorem 3.1 of \cite{Hansen2017}, all integer convex combinations of extreme points are realizable and therefore also optimal. This occurs here when $m=n+1$. Indeed, we have $(1,0,3)=\frac{1}{2}$(\V{3}+\V{9b}).
In summary, the best extreme points are those given in Table \ref{minAlbertson}. This is exactly the same result as what is stated in \cite{article33}. Note that once we know the list of extreme points that minimize the Albertson index (i.e. the points in Table \ref{minAlbertson}), it is very easy to determine which ones are optimal for a given pair $(n,m)$. For the above example with $(n,m)=(13,15)$, we have $m\ge n+2$, which implies that \V{12b}=(0,0,5) and \V{12c}=(1,0,7) are the points we are interested in.

\begin{table}[h!]
	\footnotesize
\centering
	\caption{Points that minimize the Albertson index for $(n,m)$ satisfying condition (\ref{m>12})}\label{minAlbertson}~\\
		\begin{tabular}{r| c | c |c | l}\cline{2-4} 
			&$m_{12}$ & $m_{13}$ & $m_{33}$&\\ \cline{2-4}
			\V{2}&2 & 0 & 0& if $m=n-1$\\ \cline{2-4}
			\V{1}&0 & 0 & 0& if $m=n$\\ \cline{2-4}
			\V{3}&0 & 0 & 1& \multirow{5}{*}{if $m=n+1$}\\			
				\V{9a}&1 & 1 & 5& \\
	\V{9b}&2 & 0 & 5& \\ 
			\V{9c}&0 & 1 & 3& \\
            $\frac{1}{2}$(\V{3}+\V{9b})&1 & 0 & 3&\\\cline{2-4}
			\V{12b}&0 & 0 & $3m-3n-1$& \multirow{2}{*}{if $n+2\le m\leq \frac{3n-3}{2}$}\\
			\V{12c}&1 & 0 & $3m-3n+1$&\\			 \cline{2-4}
		\end{tabular}
\end{table}

\section{Concluding remarks and future work}\label{sec:futurework}

We have given a complete polyhedral description of all chemical graphs of maximum degree at most 3. This allowed us to describe the convex hull associated with chemical graphs for any pair $(n, m)$, and to observe that the maximum number of extreme points is 16. This is an important reinforcement of the formulation induced by Hansen \emph{et al.}~\cite{Hansen2017}. Indeed, as they mention in their paper, Equations \eqref{Hansen1}{-}\eqref{Hansen7} can be transformed into an integer linear program (IP) with $2^{\Delta - 1} + \Delta(\Delta + 2) - 2$ variables and $2^\Delta + \frac{\Delta}{2}
 (\Delta^2 - 
\Delta + 6) - 4 + B_{\Delta}$ constraints, where $\Delta$ is an upper bound on the maximum degree and $B_i$ is the $i^{\text{th}}$ Bell number.  For example, if $n = 14$, $m = 13$ and $\Delta=3$, the IP induced by  Equations \eqref{Hansen1}{-}\eqref{Hansen7} has 17 variables and 27 constraints. Its linear relaxation is very far from the convex hull defined in this paper (12 extreme points and 10 facets) since it contains $11\,648$ extreme points.

The same work could be done for a maximum degree at most 4. The work would be essentially the same. More precisely, a chemical graph would then be characterized using 9 variables instead of 5, since $m_{14}$, $m_{24}$, $m_{34}$ and $m_{44}$ are then potentially strictly positive. Here too, when the values of $n$ and $m$ are fixed, this reduces the degree of freedom by 2 and we can therefore work in a 7-dimensional space spanned by 7 of the 9 variables. To obtain a polyhedral description, it would be necessary, as we did for a maximum degree at most 3, to determine inequalities that are valid when conditions on $n$ and $m$ are imposed, and to show that they are facet defining by exhibiting 7 affinely independent points of the hyperplane defined by each inequality. Subsequently, for a pair $(n,m)$, by denoting $\mathcal{P}'_{n,m}$ the polytope defined by the set of active facets for $(n,m)$, we could examine all the intersections of 7 facets and show that if the intersection exists and is in $\mathcal{P}'_{n,m}$, then it corresponds to a  point that is realizable for $(n,m)$. 

The work to be done for a complete polyhedral description of chemical graphs of maximum degree at most 4 is however much more complex than it was for a maximum degree at most 3. Indeed, while the polytopes that we have described in this paper all have at most 10 facets and at most 16 extreme points, these numbers explode when the maximum degree can be equal to 4. For example, we generated all the chemical graphs of maximum degree at most 4 for $n=21$ and $m=23$ and we were thus able to observe that the associated polytope contains 761 facets and 402 extreme points. In summary, although this complete polyhedral description is possible, using the same tools as those described in this article, the number of cases to be considered is enormous and therefore requires considerable work that we nevertheless plan to carry out in the near future. 

We conclude by pointing out that the ChemicHull website~\cite{ChemicHull} mentioned in Section~\ref{sec:realizable} allows, for any given pair $(n, m)$, to list and visualize the associated polytope, its facets and its extreme points.

\bibliographystyle{acm}
\bibliography{main}

\end{document}